\newtheorem{theorem}{Theorem}[section]
\newtheorem{proposition}[theorem]{Proposition}
\newtheorem{corollary}[theorem]{Corollary}
\newtheorem{lemma}[theorem]{Lemma}
\newtheorem{example}[theorem]{Example}
\newtheorem{remark}[theorem]{Remark}
\newtheorem{definition}[theorem]{Definition}
\newenvironment{proof}{\noindent{\sc Proof.}}{\quad\qed\medskip}
\newcommand{\F}{{\cal F}}
\newcommand{\E}{{\underline{\underline{E}}}}
\newcommand{\gd}{{\underline{\underline{\text{\rm gd}}}\ }}
\newcommand{\gde}{{\underline{\underline{\emph{gd}}}\ }}
\newcommand{\gdp}{{{\underline{\text{\rm gd}}}\ }}
\newcommand{\qed}{\quad\lower0.05cm\hbox{$\Box$}}
\newcommand{\downarrowright}[1]{\downarrow
\rlap{\raise0.1cm\hbox{$\scriptstyle{#1}$}}}
\newcommand{\downarrowleft}[1]{\rlap{\kern-0.2cm
\raise0.1cm\hbox{$\scriptstyle{#1}$}}\downarrow}
\newcommand{\uparrowright}[1]{\uparrow
\rlap{\lower0.1cm\hbox{$\scriptstyle{#1}$}}}
\newcommand{\uparrowleft}[1]{\rlap{\kern-0.2cm
\lower0.1cm\hbox{$\scriptstyle{#1}$}}\uparrow}
\newcommand{\CommH}{\textrm{Comm}_G(H)}
\newcommand{\Comm}{\textrm{Comm}}
\definecolor{byzantium}{rgb}{0.44, 0.16, 0.39}
\begin{document}
\setcounter{page}{1}
\title{Classifying spaces for the family of virtually cyclic subgroups of braid groups}

\author{Ramón Flores and Juan Gonz\'alez-Meneses \footnote{The first author was supported by MEC-FEDER grant MTM2013-42293-P and the second author was supported by MEC-FEDER grant MTM2013-44233-P. Both authors supported by MEC-FEDER grant MTM2016-76453-C2-1-P}}
\date{February 7th, 2018}
\maketitle

\begin{abstract}

We prove that, for $n\geq 3$, the minimal dimension of a model of the classifying space of the braid group $B_n$, and of the pure braid group $P_n$, with respect to the family of virtually cyclic groups is $n$.

\end{abstract}


\section{Introduction}

In the last years, there has been a growing interest in the description of the classifying space of a group with respect to the family of its virtually cyclic subgroups, usually denoted $\underline{\underline{E}}G$. The main reason for this interest is that this space is the target of the Farrell-Jones conjecture \cite{FaJo93}, which given a group $G$, intends to compute the (algebraic) $K$-theory of the group ring $\mathbb{Z}G$ through an assembly map that uses as a source non-connective (topological) $K$-theory of the classifying space. The conjecture has been proved to be true for a number of groups, see \cite{LuRe05} for an excellent survey.

The knowledge of the algebraic $K$-theory of $\mathbb{Z}G$ is based in part on finding manageable and ``small" models for $\underline{\underline{E}}G$, and in particular in understanding which is the minimal possible dimension of such a model. Hence, this problem has been widely studied in the last years, and computations are available for an important number of classes of groups, as for example locally finite groups \cite{DePe13}, polycyclic groups \cite{LuWe12} and more generally elementary amenable groups \cite{DePe14}, \cite{FlNu13}, CAT(0)-groups \cite{Luc09}, \cite{DePe15}, linear groups \cite{DKP15}, hyperbolic groups \cite{JuLe06}, \cite{LaOr07} or mapping class groups \cite{DePe15}, \cite{JuTr16}.

We are interested in the (full) braid groups $B_n$, which in fact can be seen as a particular instance of mapping class groups. These objects, aside from being interesting by their own right, have appeared in very different fields of Mathematics, as for example algebra, topology, physics or geometry. In our context, it has been proved by Juan-S\'anchez \cite{JuSa16} that Farrell-Jones isomorphism holds for them, and moreover their lower algebraic $K$-theory groups have been computed \cite{GuJu15}. However, not a lot is known about concrete models for $\underline{\underline{E}}B_n$, not even the minimal dimension of these spaces. In his PhD Thesis \cite{Flu12}, M. Fluch (Proposition 4.16), estimated this dimension between 3 and 5 in the case of $B_3$. More recently, Juan-Trujillo \cite{JuTr16} bounded it for a general $B_n$ with a factorial bound (more information at the end of Section 3).

The main result of our paper (Theorem \ref{teoremazo}) is that the minimal dimension of a model for $\underline{\underline{E}}B_n$ (and also of the pure version $\underline{\underline{E}}P_n$), for every $n\geq 3$, is $n$. In this way, it is proved that the equality in Question 1.2 of \cite{Luc09} holds for these groups, and we expect that our result is helpful for computations of Bredon (co)homology and/or algebraic $K$-theory. Of course, next challenge in this context will be to find explicit models of $\underline{\underline{E}}B_n$ that realize the minimal dimension, and in this sense, a promising line of research concerns the actions of braid groups on CAT(0)-spaces (see \cite{HKS16} and references there). On the other hand, our methods strongly depend on the rich internal structure of the (full) braid groups, so it is likely they can be applied to braid groups over other surfaces for which the virtually cyclic subgroups are understood (see for example \cite{GoGu13} for the case of the sphere).

The structure of the paper is the following. In Section 2 we provide the background about classifying spaces and geometric dimensions that will be needed later, and in particular, we describe L\"{u}ck-Weiermann model for the case of virtually cyclic subgroups. Section 3 is devoted to braid groups, and in it we recall general facts about these objects, and prove some new results that will be useful for our computations. In Section 4 we describe the structure of the commensurators of the cyclic subgroups. Section 5 contains the proof of our main theorem. The most difficult point of the proof is to bound the dimensions in the reducible non-periodic case, and this goal is achieved by proving that the quotient of certain normalizers in $B_n$ are virtually torsion-free, and then applying an appropriate result of C. Mart\'{i}nez (Theorem 2.5 in \cite{Mar13}) that only works in this case.

\section{Background}

\subsection{Classifying space for families}

In this section we review the notion of classifying space for a family of subgroups of a group, which will be the main object of study in this note, with special emphasis in the case of the family of virtually cyclic subgroups. We intend to give a brief survey based on the treatment of \cite{DePe14}, the reader interested in a more thorough approach is referred to L\"{u}ck excellent monography \cite{Luc05}. Moreover, we assume that the reader is familiar with the notion of $G$-CW-complex and other fundamental concepts of equivariant $G$-homotopy; in any case, all the necessary information can be found in \cite{MiVa03}, Part I, section 2.

\begin{definition}

Let $G$ be a discrete group, and $\mathcal{F}$ a family of subgroups of $G$ which is closed under conjugation and passing to subgroups. Then a $G$-CW-complex $X$ is a \emph{classifying space for the family} $\mathcal{F}$ if the fixed point set $X^H$ is contractible for every $H\in \mathcal{F}$ and empty for every $H\notin \mathcal{F}$.

\end{definition}

The classifying space for the family $\mathcal{F}$ is always unique up to $G$-homotopy equivalence, and is denoted by $E_{\mathcal{F}}G$. Observe that the definition implies that the isotropy groups of the action of $G$ over any model for $E_{\mathcal{F}}G$ are in $\mathcal{F}$. Moreover, as the family is closed under passing to subgroups, the trivial group is always a member of $\mathcal{F}$, and this implies that $E_{\mathcal{F}}G$ is always a contractible $G$-space. Remark also that if $G\in \mathcal{F}$, the point is a model for $E_{\mathcal{F}}G$.

\begin{remark}
\label{subfamilies}
Given a family $\F$ of subgroups of $G$ closed under conjugation and subgroups and a subgroup $H<G$, the family $\F\cap H$ whose elements are the intersections of $H$ with the elements of $\F$ is a family of subgroups of $H$ for which the same holds. Actually, $\F\cap H$ is precisely the set of subgroups of $H$ which belong to $\F$. If we consider the action of $H$ over $E_{\F}G$ by restriction, $E_{\F}G$ becomes a model also for $E_{\F\cap H}H$.
\end{remark}

\begin{example}

The following are the most important instances of classifying spaces for families. Each one deserves its own notation:

\begin{itemize}

\item If $\mathcal F_{\{1\}}$ is the family whose only element is the trivial subgroup, $E_{\mathcal{F}_{\{1\}}}G=EG$, the universal space for principal $G$-bundles.

\item If $\mathcal{F}_{Fin}$ is the family of finite subgroups of $G$, then $E_{\mathcal{F}_{Fin}}G$ is usually denoted $\underline{E}G$ and called the \emph{classifying space for proper actions} of $G$.

 \item If $\mathcal{F}_{vc}$ is the family of virtually cyclic subgroups of $G$, the space $E_{\mathcal{F}_{vc}}G$ is denoted $\E G$.

\end{itemize}

\end{example}

Observe that in a torsion-free group $G$, $\F_{\{1\}}=\F_{Fin}(G)$. In particular, in this case, $EG=\underline{E}G$. Also, it is well-known that in a torsion-free group $G$, every virtually cyclic subgroup is cyclic (\cite{Mac96}, Lemma 3.2), so $\mathcal F_{vc}(G)$ is the set of cyclic subgroups of $G$.

The classifying space for proper actions has been widely studied from the nineties, as it is the target of the Baum-Connes conjecture \cite{BCH94}, which remains  unsolved in full generality. However, although the classifying space $\E G$ is the object of study of the Farrell-Jones conjecture \cite{FaJo93} about the algebraic $K$-theory and $L$-theory of the group ring of $G$, also unsolved in general, much less is known about this classifying space, because a construction of a manageable model for it is usually much more difficult than the construction of a good model for $\underline{E}G$. In this sense, a big step forward has been the work of L\"{u}ck-Weiermann, which builds a model for $\E G$ using as building pieces models $E_{\mathcal{F}}H$, with $H\leq G$ and $\mathcal{F}$ families of groups which are contained in the family $\mathcal{F}_{vc}(G)$. Because of its importance in our development, we will describe now this model; details can be found in \cite{LuWe12}.

Let $G$ be a group. An equivalence relation in the set $\F_{vc}\backslash \F_{Fin}$ of infinite virtually cyclic subgroups of $G$ is defined by establishing that $H\sim K$ if and only if $H\cap K$ is infinite. We will denote by $[H]$ the equivalence class of a subgroup $H$ by this relation. Notice that conjugation in $G$ preserves $\F_{vc}$, $\F_{Fin}$ and the relation $\sim$, hence we can define $g^{-1}[H]g$ as $[g^{-1}Hg]$ for any $g\in G$ and any $H$ in $\F_{vc}\backslash \F_{Fin}$.

\begin{definition}
\label{eqclass}
If $[H]$ is an equivalence class by $\sim$, the \emph{normalizer} of $[H]$ is defined as the subgroup
$$N_G[H]:=\{g\in G\:|\ g^{-1}[H]g=[H]\}.$$
\end{definition}

It is important to observe that $N_G[H]$ contains the normalizer $N_G(H)$ for every representative $H$ of the class $[H]$, but in general is not equal to any of these normalizers (Example 2.6 in \cite{LuWe12}). However, in the groups we will be working with, there always will be a group $H_0\in [H]$ such that $N_G[H]=N_G(H_0)$.  In general, we will see in Section~\ref{Commensurators} that for every group $G$, one has $N_G[H]=\text{Comm}_G(H)$, the commensurator of $H$ in $G$ (see Definition \ref{Commdefo}).

Now a family of subgroups of $N_G[H]$ can be defined as follows:
$$\F [H]:=\{K<N_G[H] \: |\ K\sim H \text{ or } |K|<\infty \}.$$
Observe that this family is closed under conjugation (in $N_G[H]$) and taking subgroups.

Using these subgroups and families, an appropriate model for $\E G$ can be defined:

\begin{theorem}
\label{maintheorem}{\rm (\cite{LuWe12}, Theorem 2.3)} Let $I$ be a complete set of representatives of the $G$-orbits (under conjugation) of equivalence classes $[H]$ of infinite virtually cyclic subgroups of $G$. For every $[H]\in I$, choose models for $\underline{E} N_G[H]$ and $E_{\F [H]}N_G[H]$. Choose also a model for $\underline{E}G$. Now consider the $G$-pushout:
$$
\xymatrix{  \coprod_{[H]\in I}G\times_{N_G[H]}\underline{E}N_G[H] \ar[r]^{\hspace{2cm} i} \ar[d]^{\coprod_{[H]\in I}id_G\times_{N_G[H]}f_{[H]}}  & \underline{E}G \ar[d] \\
\coprod_{[H]\in I}G\times_{N_G[H]}E_{\F [H]}N_G[H]  \ar[r] & X }
$$
where $f_{[H]}$ is a cellular $N_G[H]$-map for every $[H]\in I$ and $i$ is the inclusion, or $i$ is a cellular $G$-map and $f_{[H]}$ is an inclusion for every $[H]\in I$. Then $X$ is a model for $\E G$.
\end{theorem}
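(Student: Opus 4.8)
The plan is to verify directly the defining property of $\E G$: that the fixed point set $X^K$ is contractible when $K$ is virtually cyclic and empty otherwise. Since one of the two maps in the span is a $G$-cofibration (an inclusion of $G$-CW-complexes), the square is a homotopy pushout of $G$-spaces, and the functor $(-)^K$, which for $G$-CW-complexes preserves such pushouts, turns it into a homotopy pushout of fixed point sets. Thus for each $K\leq G$ I would analyze the square obtained by taking $K$-fixed points of all four corners and read off the homotopy type of $X^K$.

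The computation of the corners rests on the standard induction formula
$$(G\times_L Y)^K=\coprod_{gL:\ g^{-1}Kg\leq L}Y^{g^{-1}Kg},$$
valid for any $L\leq G$ and any $L$-CW-complex $Y$. Combined with the defining properties of $\underline{E}N_G[H]$ (fixed points contractible on finite subgroups, empty otherwise) and of $E_{\F[H]}N_G[H]$ (fixed points contractible exactly on subgroups in $\F[H]$, i.e.\ finite ones and those equivalent to $H$), this disposes of the two easy cases. If $K$ is not virtually cyclic then each relevant $g^{-1}Kg$ is neither finite nor $\sim H$, so all four corners are empty and $X^K=\emptyset$. If $K$ is finite then the top-right corner $(\underline{E}G)^K$ is contractible, and in each induced summand the vertical map restricts to a map between contractible spaces $(\underline{E}N_G[H])^{g^{-1}Kg}\to (E_{\F[H]}N_G[H])^{g^{-1}Kg}$; as $id_G\times f_{[H]}$ is the identity on the $G$-factor the indexing cosets match, so the left vertical map is a fixed-point homotopy equivalence and $X^K\simeq (\underline{E}G)^K$ is contractible.

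The crux is the case where $K$ is infinite virtually cyclic, where I would have to show that exactly one contractible summand survives. Here the top row of the fixed-point square is empty: $(\underline{E}G)^K=\emptyset$ because $K$ is infinite, and each $(\underline{E}N_G[H])^{g^{-1}Kg}=\emptyset$ because $g^{-1}Kg$ is infinite; hence the homotopy pushout collapses to $X^K\simeq\coprod_{[H]\in I}(G\times_{N_G[H]}E_{\F[H]}N_G[H])^K$. A summand indexed by $[H]\in I$ and a coset $gN_G[H]$ is contractible precisely when $g^{-1}Kg\sim H$, i.e.\ $g^{-1}[K]g=[H]$, and is empty otherwise. Since $I$ contains exactly one representative of each $G$-conjugacy class of equivalence classes, only the unique $[H_0]\in I$ conjugate to $[K]$ can contribute. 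The main obstacle is then a careful coset bookkeeping: fixing one $g_0$ with $g_0^{-1}[K]g_0=[H_0]$, I would show that the condition $g^{-1}[K]g=[H_0]$ forces $gg_0^{-1}\in N_G[K]$, that the needed inclusion $g^{-1}Kg\leq N_G[H_0]$ is then automatic (since a group normalizes its own elements), and, using $N_G[H_0]=g_0^{-1}N_G[K]g_0$, that the contributing $g$ form the single left coset $g_0N_G[H_0]$. Consequently exactly one contractible piece remains and $X^K$ is contractible, completing the verification. The delicate point throughout is that $N_G[H]$ is exactly the stabilizer of $[H]$ under conjugation, which is what makes these cosets line up to leave precisely one nonempty, contractible summand.
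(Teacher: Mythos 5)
This is a theorem the paper quotes from L\"uck--Weiermann (\cite{LuWe12}, Theorem 2.3) without giving any proof, so there is no in-paper argument to compare against; your proposal must be judged against the original source. Your argument is correct and is essentially the proof in \cite{LuWe12}: one checks the universal property of $\E G$ on fixed-point sets, using that fixed points commute with pushouts along cellular $G$-inclusions and with induction via the formula $(G\times_L Y)^K=\coprod_{gL:\,g^{-1}Kg\leq L}Y^{g^{-1}Kg}$, and the three cases ($K$ not virtually cyclic, $K$ finite, $K$ infinite virtually cyclic) come out exactly as you describe, the last one reducing to the coset count showing that the contributing cosets form the single coset $g_0N_G[H_0]$ because $N_G[H]$ is precisely the stabilizer of $[H]$ under conjugation. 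No gaps: in particular your two delicate points (that $g^{-1}Kg\leq N_G[H_0]$ is automatic once $g^{-1}[K]g=[H_0]$, and that $N_G[H_0]=g_0^{-1}N_G[K]g_0$ collapses the index set to one coset) are exactly the facts needed, and both are verified correctly.
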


In practice, this theorem implies that the existence of good models for the proper classifying space of the commensurators and $G$, and also of the classifying spaces with respect to the families $\F [H]$ will lead to the knowledge of good models for $\E G$. Moreover, the push-out implies the existence of a long exact sequence in Bredon homology, and dimensional consequences that we will analyze in next section.

Although we have described here only the particular case of finite and virtually cyclic subgroups (as it is the only one that will be needed), it is worth to point out that L\"{u}ck-Weiermann model is defined for any two families $\cal{F}$ and $\cal{G}$ such that $\cal{F}\subseteq \cal{G}$ and an equivalence relation subject to some conditions is defined in $\cal{G}\backslash \cal{F}$ (see \cite{LuWe12}, Section 2).

\subsection{Minimal dimensions for $\E G$}

In this section we will recall the results about geometric dimensions of classifying spaces that will be necessary in the remaining of this note.

\begin{definition}

Let $G$ a group, $\F$ a family closed under conjugation and taking subgroups. The \emph{geometric dimension} of $G$ with respect to the family $\F$ is the minimal dimension $\emph{gd}_{\F} G$ of a model for $E_{\F}G$.

\end{definition}

According to the classical notation, we denote $\textrm{gd}_{\F}G$ by $\textrm{gd }G$ when $\F=\F_{\{1\}}$; by $\underline{\textrm{gd}}\textrm{ }G$ when $\F=\F_{Fin}$; and by $\gd G$ when $\F=\mathcal{F}_{vc}$. We are mainly interested in the latter.

The study of the dimension of a group is a classic topic of research where group theory, homological algebra and geometry overlap via the different geometric and (co)homological versions of the dimension. Usually, these different versions are related, and small values of them allow a sharp knowledge of the structure of the group. Of course, in this paper we are mainly interested in the geometric dimension with respect to the family of virtually cyclic groups, but in our arguments we will also need to compute some proper geometric dimensions, as well as some algebraic counterparts of it. Among them, it will be particularly important the next one:

\begin{definition}

Given a group $G$ and a family $\mathcal{F}$ of subgroups of $G$ closed under conjugation and taking subgroups, the (Bredon) cohomological dimension of $G$ with respect to the family $\mathcal{F}$, denoted by $\emph{cd}_{\mathcal{F}}G,$ is the maximal dimension of a nonzero Bredon cohomology group with respect to $\mathcal{F}$.

\end{definition}

The main definitions about Bredon (co)homology can be found in \cite{MiVa03} or \cite{Flu12}. In particular, the following fact is described in the second reference, and it will be relevant for our purposes:

\begin{proposition}
\label{Eilenberg}

In the notation of the previous definition, for every $G$ and $\mathcal{F}$ the equality $\rm{cd}_{\mathcal{F}}G=\rm{gd}_{\mathcal{F}}G$ holds whenever $\rm{gd}_{\mathcal{F}}G\neq 3$. If $\rm{gd}_{\mathcal{F}}G=3$, then $2\leq \rm{cd }_{\mathcal{F}}G\leq 3$.

\end{proposition}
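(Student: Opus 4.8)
The plan is to establish the two-sided estimate
\[
\mathrm{cd}_{\mathcal F}G \;\leq\; \mathrm{gd}_{\mathcal F}G \;\leq\; \max\{3,\,\mathrm{cd}_{\mathcal F}G\},
\]
which is the Bredon analogue of the classical Eilenberg--Ganea theorem, and then to refine the borderline low-dimensional cases. The left-hand inequality is the easy direction: given a model $X$ for $E_{\mathcal F}G$ of dimension $d=\mathrm{gd}_{\mathcal F}G$, the cellular chain complex of $X$, viewed as a chain complex of Bredon modules over the orbit category $\mathcal O_{\mathcal F}G$, is a free resolution of the constant module of length $d$. Hence the Bredon-projective dimension of the constant module, that is $\mathrm{cd}_{\mathcal F}G$, is at most $d$. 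First I would record this, together with the trivial observation that $\mathrm{gd}_{\mathcal F}G=0$ holds exactly when $G\in\mathcal F$, which in turn is equivalent to $\mathrm{cd}_{\mathcal F}G=0$; and with the (nontrivial) Bredon analogue of the Stallings--Swan--Dunwoody theorem, namely that $\mathrm{cd}_{\mathcal F}G\leq 1$ implies $\mathrm{gd}_{\mathcal F}G\leq 1$.

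The substance is the right-hand inequality, and this is where I expect the main obstacle to lie. Set $n=\mathrm{cd}_{\mathcal F}G$ and assume $n\geq 3$; the goal is to produce a model of $E_{\mathcal F}G$ of dimension $n$. Starting from an arbitrary model and building up the skeleta, one constructs an $\mathcal F$-CW-complex $X$ whose fixed-point sets $X^H$ are contractible for $H\in\mathcal F$ and empty otherwise, using equivariant obstruction theory: at each stage the obstruction to extending an equivariant map, or to killing a homotopy group of the fixed-point systems, lives in a Bredon cohomology group with coefficients in the relevant homotopy Bredon module, and above dimension $n$ these groups vanish by hypothesis. The delicate point, exactly as in the non-equivariant Eilenberg--Ganea argument, is that in passing from an algebraic resolution to a geometric model one can realize the top free Bredon module by attaching equivariant cells of the prescribed dimension once that dimension is at least $3$; in dimension $2$ this realization can fail, which is precisely what forces the bound $\max\{3,n\}$ rather than $n$.

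With the sandwich inequality in hand, the Proposition follows by a short case analysis on $g=\mathrm{gd}_{\mathcal F}G$, using only that $\mathrm{cd}_{\mathcal F}G\leq g$ always. If $g\geq 4$, then $\max\{3,\mathrm{cd}_{\mathcal F}G\}\geq g\geq 4>3$ forces $\mathrm{cd}_{\mathcal F}G\geq g$, so combined with $\mathrm{cd}_{\mathcal F}G\leq g$ we obtain equality. If $g\leq 2$ and one had $\mathrm{cd}_{\mathcal F}G<g$, then $0\leq\mathrm{cd}_{\mathcal F}G\leq g-1\leq 1$, and the two low-dimensional realization facts ($\mathrm{cd}_{\mathcal F}G=0\Rightarrow\mathrm{gd}_{\mathcal F}G=0$ and $\mathrm{cd}_{\mathcal F}G\leq 1\Rightarrow\mathrm{gd}_{\mathcal F}G\leq 1$) would give $\mathrm{gd}_{\mathcal F}G\leq g-1$, a contradiction; hence $\mathrm{cd}_{\mathcal F}G=g$. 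Finally, if $g=3$ then $\mathrm{cd}_{\mathcal F}G\leq 3$ automatically, while $\mathrm{cd}_{\mathcal F}G\leq 1$ would yield $\mathrm{gd}_{\mathcal F}G\leq 1$ by the same fact, so $\mathrm{cd}_{\mathcal F}G\geq 2$, giving $2\leq\mathrm{cd}_{\mathcal F}G\leq 3$. The only genuine input beyond this bookkeeping is thus the equivariant Eilenberg--Ganea construction of the second paragraph.
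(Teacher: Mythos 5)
First, a point of comparison: the paper does not prove this proposition at all --- it is quoted as a known fact from Fluch's thesis \cite{Flu12}, so your attempt is being measured against a citation rather than an argument. Your architecture is the standard one underlying that fact, and most of it is sound: the inequality $\mathrm{cd}_{\mathcal F}G\leq\mathrm{gd}_{\mathcal F}G$ via the Bredon chain complex of a model is correct; the bound $\mathrm{gd}_{\mathcal F}G\leq\max\{3,\mathrm{cd}_{\mathcal F}G\}$ is the equivariant Eilenberg--Ganea theorem of L\"uck and Meintrup (your obstruction-theoretic sketch is the right mechanism, though be aware that this step is the real content of the whole statement, not a formality); and the closing case analysis on $g=\mathrm{gd}_{\mathcal F}G$ is flawless and correctly isolates exactly which low-dimensional realization statements are needed.

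That is precisely where the genuine gap lies. The ingredient you invoke as ``the (nontrivial) Bredon analogue of the Stallings--Swan--Dunwoody theorem'', namely that $\mathrm{cd}_{\mathcal F}G\leq 1$ implies $\mathrm{gd}_{\mathcal F}G\leq 1$, is \emph{not} a theorem for arbitrary families closed under conjugation and subgroups: it is known for the trivial family (Stallings--Swan) and for the family of finite subgroups (Dunwoody), and for a general family $\mathcal F$ it is an open problem. Your own bookkeeping shows this ingredient is indispensable: without it one cannot exclude $\mathrm{cd}_{\mathcal F}G=1$ when $\mathrm{gd}_{\mathcal F}G=2$ (where the proposition asserts equality) or when $\mathrm{gd}_{\mathcal F}G=3$ (where it asserts $\mathrm{cd}_{\mathcal F}G\geq 2$), so those cases remain unproved in the stated generality. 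A smaller inaccuracy of the same kind: the implication $\mathrm{cd}_{\mathcal F}G=0\Rightarrow G\in\mathcal F$ is a genuine (known) result about projectivity of the constant Bredon module, not a ``trivial observation''; only the converse direction and the equivalence $\mathrm{gd}_{\mathcal F}G=0\Leftrightarrow G\in\mathcal F$ are immediate. In fairness, the proposition as printed, with its ``for every $G$ and $\mathcal F$'', overstates the literature in exactly the same way; note, however, that the paper only ever applies it to the trivial family (in Lemma \ref{vcd}) and to $\mathcal F_{Fin}$ (in the proposition bounding $\gdp\Gamma$), and for those two families your argument --- with Stallings--Swan and Dunwoody supplying the dimension-one step --- is complete and correct, and is consistent with the known counterexamples \cite{BLN01}, \cite{FlLe14} showing that the exceptional case $\mathrm{cd}_{\mathcal F}G=2$, $\mathrm{gd}_{\mathcal F}G=3$ really occurs.
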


The question about the equality when $\rm{gd }_{\mathcal{F}}G=3$ is known as Eilenberg-Ganea conjecture, and it is yet unsolved in the case of the trivial family. The paper \cite{BLN01} contains an interesting discussion about this conjecture, including examples of the strict inequality for the family of finite groups; examples for the family of virtually cyclic groups can be found in \cite{FlLe14}.

The key result that will lead us to identify $\gd B_n$ is a bound implied by the previously described model of L\"{u}ck-Weiermann:

\begin{corollary}\label{C:cotas_suficientes} (Remark 2.5 in \cite{LuWe12}) With the above notations, suppose there exists a natural number $k$ such that:
\begin{itemize}
\item $\gdp G\leq k$.
\item $\gdp N_G[H]\leq k-1$, for every $[H]\in I$.
\item $\text{\rm gd}_{\F[H]}N_G[H]\leq k$, for every $[H]\in I$.

\end{itemize}
Then $\gd G\leq k$.

\end{corollary}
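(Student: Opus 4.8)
The plan is to run the three hypotheses through the L\"uck--Weiermann model of Theorem \ref{maintheorem} and then bound the dimension of the resulting pushout. Using the hypotheses, I would choose, for each $[H]\in I$, a model for $\underline{E}N_G[H]$ of dimension $\le k-1$ and a model for $E_{\F[H]}N_G[H]$ of dimension $\le k$, together with a model for $\underline{E}G$ of dimension $\le k$. Theorem \ref{maintheorem} then produces a model $X$ for $\E G$ as the $G$-pushout of $B\xleftarrow{\ i\ }A\xrightarrow{\ f\ }C$, where
$$
B=\underline{E}G,\qquad A=\coprod_{[H]\in I}G\times_{N_G[H]}\underline{E}N_G[H],\qquad C=\coprod_{[H]\in I}G\times_{N_G[H]}E_{\F[H]}N_G[H].
$$
Since $\gd G\le\dim X$ for any model $X$, it suffices to prove $\dim X\le k$.

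First I would record the dimensions of the three corners. Inducing up along $N_G[H]\le G$ sends an $N_G[H]$-CW-complex to a $G$-CW-complex of the same dimension, and a disjoint union does not exceed the supremum of the dimensions of its pieces. Hence $\dim A\le \max_{[H]\in I}\gdp N_G[H]\le k-1$, while $\dim C\le \max_{[H]\in I}\text{\rm gd}_{\F[H]}N_G[H]\le k$ and $\dim B=\dim\underline{E}G\le k$.

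The core of the argument is the dimension estimate for the pushout. Since one of the two maps $i,f$ may be assumed to be an inclusion of $G$-CW-complexes, the square is a homotopy pushout; realizing it up to $G$-homotopy as the double mapping cylinder of $i$ and $f$, the only cells not already accounted for by $B$ and $C$ come from the cylinder on $A$, and these have dimension $\dim A+1$. Therefore
$$
\dim X\ \le\ \max\bigl\{\dim B,\ \dim C,\ \dim A+1\bigr\}\ \le\ \max\{k,\,k,\,(k-1)+1\}\ =\ k,
$$
which gives $\gd G\le k$. Equivalently, one may replace each $E_{\F[H]}N_G[H]$ by the mapping cylinder of $f_{[H]}$: this is still a model for $E_{\F[H]}N_G[H]$, it has dimension $\max\{(k-1)+1,k\}=k$, and the resulting strict pushout along the now-cofibration $f$ has dimension $\le\max\{\dim B,\dim C\}\le k$.

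The point I would flag as the crux is precisely this extra $+1$ contributed by the cylinder on the left-hand corner $A$: it is what forces the second hypothesis to read $\gdp N_G[H]\le k-1$ rather than $\le k$, and so explains the asymmetric roles of the three bounds. The only supporting facts worth verifying explicitly are that induction preserves $G$-CW-dimension and that the mapping-cylinder replacement of $f_{[H]}$ retains the correct fixed-point sets (contractible on $\F[H]$, empty otherwise) to remain a model for $E_{\F[H]}N_G[H]$; both are routine.
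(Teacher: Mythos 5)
Your proof is correct, and it is precisely the argument behind Remark 2.5 of \cite{LuWe12}, which the paper cites in place of a proof: after equivariant cellular approximation one realizes the pushout of Theorem \ref{maintheorem} as a double mapping cylinder, and the only dimension shift comes from the cylinder on $\coprod_{[H]\in I}G\times_{N_G[H]}\underline{E}N_G[H]$, which is exactly why that corner must have dimension at most $k-1$. Both your cell count and your mapping-cylinder replacement of $f_{[H]}$ (whose fixed-point sets are the mapping cylinders of the fixed-point maps, hence still contractible or empty as required) are sound, so there is nothing to correct.
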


When applying this corollary, it is interesting to realize that according to Remark \ref{subfamilies}, for every $H<G$ and every family of subgroups of $G$ closed under conjugation and subgroups, $\textrm{gd}_{\F\cap H} H\leq \textrm{gd}_{\F} G$. Then, $\gdp G$ and $\gd G$ work as upper bounds for the corresponding minimal dimension of any of their subgroups.






We should also mention here Proposition 5.1 in \cite{LuWe12}, which establishes that an upper bound for $\gd$ automatically gives a bound for $\underline{\textrm{gd}}$:

\begin{proposition}

For any discrete group G, $\underline{\emph{gd}}\textrm{ }G\leq \gde G+1$.

\end{proposition}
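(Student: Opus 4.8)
The plan is to produce a model for $\underline{E}G$ directly out of a minimal model for $\E G$ by a fibre-wise thickening, so that the single extra unit of dimension is governed by the proper geometric dimension of the isotropy groups. The only structural input needed is that every virtually cyclic group $V$ satisfies $\gdp V\leq 1$: if $V$ is finite, a point is a model for $\underline{E}V$; and if $V$ is infinite virtually cyclic it acts properly, with finite stabilisers, on the real line — through its projection onto $\Z$ or onto the infinite dihedral group — yielding a one-dimensional model. Hence $\gdp H\leq 1$ for every $H\in\F_{vc}$, and by Remark~\ref{subfamilies} this is exactly the statement $\textrm{gd}_{\F_{Fin}\cap H}H\leq 1$.

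Now fix a model $Y$ for $\E G$ with $\dim Y=\gd G$; all its isotropy groups lie in $\F_{vc}$. I would build a $G$-CW-complex $X$ together with a $G$-map $X\to Y$ whose ``fibre'' over a point $y$ with isotropy $G_y=H$ is a model for $\underline{E}H$. Concretely one argues by induction over the skeleta of $Y$: a cell $G\times_H D^k$ is replaced by the thickened cell $G\times_H(\underline{E}H\times D^k)$, glued to the part already constructed along $G\times_H(\underline{E}H\times S^{k-1})$. Since each fibre $\underline{E}H$ is at most one-dimensional, such a replacement contributes cells of dimension at most $k+1$, so running through all the cells of $Y$ gives $\dim X\leq\dim Y+1=\gd G+1$.

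The point that makes this work is the fixed-point computation showing $X$ is a model for $\underline{E}G$. A point of $X$ over $y\in Y$ is a pair $(y,t)$ with $t\in\underline{E}G_y$, and it is fixed by $K$ precisely when $K\leq G_y$ and $t\in(\underline{E}G_y)^K$. If $K$ is finite, then $X^K$ fibres over $Y^K$ with contractible fibres $(\underline{E}G_y)^K$, so $X^K\simeq Y^K$, which is contractible because $K\in\F_{vc}$. If $K$ is infinite but not virtually cyclic, then already $Y^K=\emptyset$, whence $X^K=\emptyset$. Finally, if $K$ is infinite virtually cyclic, then $Y^K$ is nonempty and contractible, but now every fibre $(\underline{E}G_y)^K$ is empty, since $K$ is an infinite subgroup of $G_y$; hence $X^K=\emptyset$ again. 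Thus exactly the fixed sets of the infinite virtually cyclic subgroups are collapsed, and $X$ realises $\underline{E}G$ in dimension $\leq\gd G+1$.

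The delicate part, and where I expect the real work to lie, is the coherence of the fibre-wise choices during the inductive attachment. When a cell $G\times_H D^k$ is glued, its boundary maps into lower skeleta carrying fibres $\underline{E}H'$ for isotropy groups $H'\supseteq H$, whereas over the cell itself the fibre is $\underline{E}H$; to glue one needs $H$-maps $\underline{E}H\to\underline{E}H'$. These exist and are unique up to $H$-homotopy by the universal property of $\underline{E}H'$, whose fixed-point sets under finite subgroups of $H$ are contractible, so any $H$-CW-complex with finite isotropy — in particular $\underline{E}H$ — maps into it essentially uniquely (note that $\underline{E}H'$ restricted to $H$ is a model for $\underline{E}H$, again by Remark~\ref{subfamilies}). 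The vanishing of the associated equivariant obstructions is what guarantees that the thickened cells assemble into a genuine $G$-CW-complex of the asserted dimension, and keeping all these choices compatible across cells is the bulk of the technical effort.
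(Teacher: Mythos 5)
Your proposal is correct, and it is in essence the standard proof of this inequality: the two ingredients you use --- that every virtually cyclic group $V$ has a one-dimensional model for $\underline{E}V$ (a point, or the real line via the projection onto $\Z$ or the infinite dihedral group), and the skeleton-by-skeleton replacement of each cell $G\times_H D^k$ by $G\times_H(\underline{E}H\times D^k)$, with gluing maps supplied, uniquely up to equivariant homotopy, by the universal property of classifying spaces for families --- are exactly the ingredients of the argument in the literature. Note that the paper itself offers no proof at all: it quotes the statement as Proposition 5.1 of L\"uck--Weiermann \cite{LuWe12}, and your construction is essentially a reconstruction of the proof of that cited result, so the two approaches agree.
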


Note that the equality is possible: for example, as $\mathbb{Z}$ is virtually cyclic, $\gd \mathbb{Z}=0$, while $\gdp\mathbb{Z}=1$.  Usually, however, $\gdp G\leq \gd G$, and it is a question of L\"{u}ck \cite{Luc05} to identify families for which the inequality $\gd G\leq \gdp G+1$ holds, and also for which this inequality is an equality. For example, Degrijse-Petrosyan (Corollary 4.4 and Example 6.5 in \cite{DePe14}), prove that the inequality holds for elementary amenable groups, and also construct Bestvina-Brady-like counterexamples for which it does not hold. While it is easy to check that the inequality holds for braid groups (see Proposition \ref{P:abelian_inside} and the previous discussion), to prove that it is in fact an equality has been one of the main motivations of our paper, and is a beautiful and immediate consequence of Theorem \ref{teoremazo}.



\section{Braid groups}
\label{S:braids}

\subsection{A little survey}

In this section we collect the basic definitions and facts about braid groups. For further information the interested reader is referred to the more detailed treatments of \cite{GuJu15} or \cite{Gon11}

Given an integer $n\geq 2$, the braid group $B_n$ on $n$ strands can be defined in several different ways. Algebraically, it is given by the following presentation~\cite{Art25,Art47}:
$$
B_n=\left\langle \sigma_1,\ldots, \sigma_{n-1} \left| \begin{array}{cl} \sigma_i\sigma_j=\sigma_j\sigma_i, & \text{ if } |i-j|>1 \\ \sigma_i\sigma_j\sigma_i=\sigma_j\sigma_i\sigma_j, & \text{ if } |i-j|=1 \end{array}\right.\right\rangle
$$
Also, if $M_n$ is the configuration space of $n$ distinct points in the plane $\mathbb C$,
$$
   M_n =\{(x_1,\ldots,x_n)\in \mathbb C^n\: |\ x_i\neq x_j \text{ for } i\neq j\},
$$
and $\Sigma_n$ is the symmetric group on $n$ elements, which acts on $M_n$ by permuting coordinates, then
$$
    B_n=\pi_1(M_n/\Sigma_n).
$$
This can be visualized by choosing $n$ base points, say $\{1,\ldots,n\}\subset \mathbb C$, and considering a braid as a collection of $n$ disjoint paths in $\mathbb C \times [0,1]$, called strands, where the $i$-th strand starts at $(i,0)$, moves monotonically on the second coordinate, and ends at $(j,1)$ for some $j\in \{1,\ldots,n\}$. This collection of strands (i.e. this braid) is considered up to isotopy of $\mathbb C\times [0,1]$, fixing the boundary pointwise, and the multiplication of braids is given by stacking and rescaling. The generator $\sigma_j$ corresponds to a braid in which the strands $j$ and $j+1$ cross, while the other $n-2$ strands are constant. More precisely, $\sigma_j$ can be given by the paths
$$
\left(1,t\right),\ldots, \left(j-1, t\right), \;\left(j+\frac{1}{2}+\frac{1}{2}e^{(1-t)i\pi},t \right),\; \left(j+\frac{1}{2}+\frac{1}{2}e^{-ti\pi},t\right),\; \left(j+2,t\right),\ldots,\left(n,t\right)
$$
for $t\in [0,1]$.

Each braid has a corresponding permutation, induced by the endpoints of its strands, so there is a surjective map $B_n\rightarrow \Sigma_n$. The kernel of this map, called the {\em pure braid group} and denoted $P_n$, is the subgroup of braids in which the $i$-th strand starts at $(i,0)$ and ends at $(i,1)$ for $i=1,\ldots,n$. Actually, $P_n=\pi_1(M_n)$.~\cite{Bir74}

There is a third definition of $B_n$ that will be important for us in this paper. Let $\mathbb D_n$ be a $n$-times punctured disc (for instance, $\mathbb D_n=\mathbb D\backslash \{1,\ldots,n\}$ where $\mathbb D$ is the disc in $\mathbb C$ with diameter $[0,n+1]\subset \mathbb R$). Then a braid can be seen as an orientable homeomorphism of $\mathbb D_n$ to itself, fixing $\partial(\mathbb D_n)$ pointwise, up to isotopy relative to $\partial(\mathbb D_n)$. That is, $B_n$ is the {\em mapping class group} of $\mathbb D_n$~\cite{Hur91,Bir74}. We remark that an automorphism corresponding to a braid can be obtained from viewing the family of strands as a motion of the $n$ punctures in $\mathbb D$, which can be extended to a continuous motion of all points of $\mathbb D$, fixing the boundary. The final position of the points yields the homeomorphism, which is unique up to isotopy of $\mathbb D_n$ relative to $\partial(\mathbb D_n)$.  Conversely, if an automorphism $f$ of $\mathbb D_n$ is given which fixes the boundary $\partial(\mathbb D_n)$ pointwise, it can be uniquely extended to an automorphism $\overline f$ of the whole disk $\mathbb D$. As every automorphism of $\mathbb D$ fixing the boundary is isotopic to the trivial automorphism (Alexander's trick), we can continuously deform $\mbox{id}_\mathbb D$ into $\overline f$. The trace of the points $1,\ldots,n$ under this deformation yields a family of $n$ paths in $\mathbb D\times [0,1]$, which can be extended to $\mathbb C\times [0,1]$ by the identity outside the cylinder. These $n$ paths form the braid corresponding to the isotopy class of $f$.

The braid group $B_n$ is torsion-free~\cite{Chow48}. Its center is infinite cyclic, generated by
$$
\Delta^2= (\sigma_1(\sigma_2\sigma_1)\cdots (\sigma_{n-1}\sigma_{n-2}\cdots \sigma_1))^2
$$
This element corresponds to a Dehn-twist along a curve parallel to the the boundary of $\mathbb D_n$ (roughly speaking, a rotation of $\partial(\mathbb D_n)$ by $360$ degrees). Hence, to quotient $B_n$ by its center corresponds to collapsing the boundary to a new puncture, so $B_n/\langle \Delta^2\rangle$ is a subgroup (of index $n+1$) of the mapping class group of the $(n+1)$-times punctured sphere $\mathbb S_{n+1}$.

This way of viewing braids as mapping classes allows to use the powerful theory of Nielsen-Thurston~\cite{Thu88}, and to classify braids into three geometric types. In this way, a braid $\alpha$ is said to be:
\begin{itemize}

 \item {\em Periodic}, if some nontrivial power of $\alpha$ belongs to $\langle \Delta^2\rangle$.

 \item {\em Pseudo-Anosov}, if there is a pair of transverse measured foliations of $\mathbb D_n$ preserved by $\alpha$, such that the action of a homeomorphism $f$ (representing $\alpha$) on one of them scales the measure by some real number $\lambda>1$ (called {\em dilatation factor}), and the measure of the other one by $\lambda^{-1}$.

 \item {\em Reducible non-periodic}, if $\alpha$ preserves a family of isotopy classes of disjoint, essential simple closed curves in $\mathbb D_n$. Here {\em essential} means enclosing more than one and less than $n$ punctures.

\end{itemize}

Essentially, reducible braids are those which can be {\em reduced} into simpler ones. More precisely, suppose that $\alpha$ is a reducible, non-periodic braid. Up to conjugacy in $B_n$, we can assume that the family of isotopy classes of curves preserved by $\alpha$ can be represented by a family of circles. Up to replacing $\alpha$ by a power, we can assume that $\alpha$ is represented by an automorphism $f$ which sends every circle to itself, so we can restrict the automorphism $f$ to the connected components obtained by removing the family of circles from $\mathbb D_n$. As each of these connected components is again homeomorphic to a punctured disc, each restriction corresponds to a braid with fewer strands. Actually, there is a particular family of isotopy classes of curves, called the {\em canonical reduction system} of $\alpha$, $CRS(\alpha)$, such that each of the mentioned restrictions is either periodic or pseudo-Anosov (see \cite{BLM83}).

The decomposition of a reducible braid into simpler ones will be used several times in this paper, so we will make it more precise. Given a reducible, non-periodic braid $\alpha$, let $\mathcal C_\alpha$ be the set of outermost isotopy classes of curves in the canonical reduction system $CRS(\alpha)$. Notice that $\alpha$ preserves $\mathcal C_\alpha$ (up to isotopy). Up to conjugacy, we can assume that $\mathcal C_\alpha$ can be represented by a disjoint union of unnested circles. Now recall that an automorphism representing $\alpha$ can be extended to an automorphism $\overline f$ of $\mathbb D$, and that there is an isotopy from $\mbox{id}_{\mathbb D}$ to $\overline f$. The trace of the punctures under this isotopy represent the strands of $\alpha$, and the trace of the circles representing $\mathcal C_\alpha$ looks like a family of {\it tubes}, each one enclosing more than one and less than $n$ punctures. See Figure~\ref{F:External_braid}.

\begin{figure}[ht]
\begin{center}
  \includegraphics{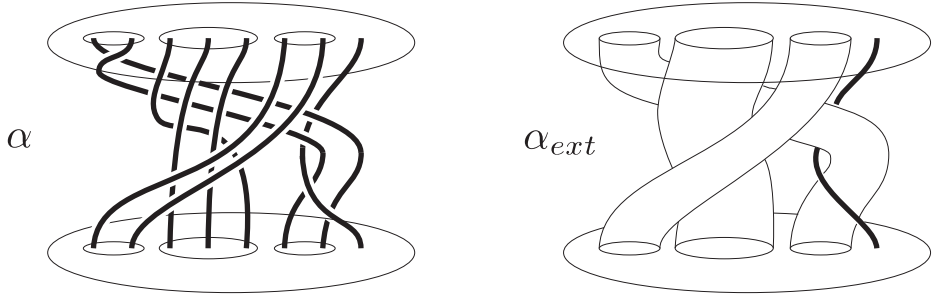}
\end{center}
\caption{A reducible braid $\alpha\in B_8$, and the external braid $\alpha_{ext}$. In this case, $\mathcal C_{\alpha}$ is represented by a family of three circles. The internal braids inside each tube are, respectively, $\sigma_1\in B_2$, $\sigma_1\sigma_2\in B_3$ and the trivial braid in $B_2$. The external braid is $\sigma_1 \sigma_2 \sigma_3^{-1} \sigma_1 \sigma_3^{-1}\in B_4$.}
\label{F:External_braid}
\end{figure}

The decomposition of $\alpha$ along $\mathcal C_\alpha$ yields an {\it external braid} $\alpha_{ext}$, and some {\it internal braids}. The external braid $\alpha_{ext}$ corresponds to forgetting what happens inside the tubes: one gets a braid made of tubes and (possibly some) strands, which is transformed into a braid by shrinking each tube to form a single strand. The internal braids are those which are contained into each tube. Actually, as a tube does not necessarily end at the same place it started, there is some shifting of the final points of the strands that should be made in order to define the internal braids properly, but it is clear from the picture (see Figure~\ref{F:External_braid}). See~\cite{Gon11b} for details.

A further simplification can still be done. Denote $C_1,\ldots, C_m$ the circles representing $\mathcal C_\alpha$. The action of $\alpha$ on $\mathcal C_{\alpha}$ induces a permutation on these circles. If $C_{i_1},\ldots,C_{i_r}$ is a cycle under this permutation, then we can conjugate $\alpha$ in such a way that all internal braids in the tubes corresponding to $C_{i_2},\ldots,C_{i_r}$ are trivial, and the only possibly nontrivial one is inside the tube corresponding to $C_{i_1}$. We can do the same with all cycles, so we can assume (up to conjugating $\alpha$) that there is just one internal braid $\alpha_i$ for each cycle of tubes determined by $\alpha$. These internal braids $\alpha_1,\ldots,\alpha_k$ are unique up to conjugacy (see \cite{GoWi04}), and can be used to describe the centralizer of $\alpha$:

\begin{theorem}\label{T:centralizer_Bn} {\rm (Theorem 1.1 in \cite{GoWi04})} Let $\alpha\in B_n$. If $n=2$ the centralizer $Z(\alpha)=B_2\simeq \mathbb Z$. If $n\geq 3$, the centralizer $Z(\alpha)$ is as follows:
\begin{itemize}

  \item If $\alpha$ is periodic, $Z(\alpha)$ either is equal to $B_n$ or is isomorphic to a subgroup of $B_m$ for some $m<n$. (Actually, in the latter case it is the braid group of an annulus on less than $n-1$ strands, which can be embedded into $B_m$).

  \item If $\alpha$ is pseudo-Anosov, $Z(\alpha)\simeq \mathbb Z^2$, consisting of all elements having a representative which preserves the same two foliations as $\alpha$.

  \item If $\alpha$ is reducible, there is a split short exact sequence
  $$
     1\longrightarrow Z(\alpha_1)\times \cdots \times Z(\alpha_k)\longrightarrow Z(\alpha) \stackrel{p_\alpha}{\longrightarrow} Z_0(\alpha_{ext})\rightarrow 1,
  $$
  where $\alpha_1,\ldots,\alpha_k$ and $\alpha_{ext}$ are defined as above, and $Z_0(\alpha_{ext})$ is a finite index subgroup of $Z(\alpha_{ext})$.
\end{itemize}
\end{theorem}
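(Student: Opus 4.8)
My plan is to build everything on the fact that the Nielsen--Thurston type and the canonical reduction system are conjugacy invariants. First I would observe that if $\beta\in Z(\alpha)$ then $\beta\cdot CRS(\alpha)=CRS(\beta\alpha\beta^{-1})=CRS(\alpha)$, so every element of the centralizer preserves $CRS(\alpha)$ (in particular the outermost family $\mathcal C_\alpha$) and cannot alter the geometric type of $\alpha$. This reduces the theorem to analysing, in each of the three Nielsen--Thurston cases, the mapping classes commuting with a fixed normal form of $\alpha$. The base case $n=2$ is immediate, since $B_2\cong\mathbb Z$ is abelian and hence $Z(\alpha)=B_2$ for every $\alpha$.

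For the periodic case I would invoke the classical theorem of Brouwer--Ker\'ekj\'art\'o--Eilenberg, which realizes a periodic mapping class by a genuine finite-order homeomorphism, that is (after conjugation) by a rigid rotation $\rho$ of $\mathbb D_n$. If $\alpha\in\langle\Delta^2\rangle$, then $\alpha$ is central and $Z(\alpha)=B_n$. Otherwise $\rho$ is a nontrivial rotation, and the quotient of $\mathbb D_n$ by $\langle\rho\rangle$ is a punctured annulus. Appealing to Birman--Hilden theory, a mapping class commutes with $\rho$ precisely when it descends to this quotient, which identifies $Z(\alpha)$ with the mapping class group of the punctured annulus, i.e. an annular braid group on fewer than $n-1$ strands, embeddable in some $B_m$ with $m<n$.

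For the pseudo-Anosov case the essential input is the uniqueness, up to isotopy and scaling, of the invariant pair of transverse measured foliations. Any $\beta\in Z(\alpha)$ must preserve this pair, so recording the factor by which $\beta$ scales the transverse measures defines a homomorphism $Z(\alpha)\to\mathbb R_{>0}$; its image is discrete, hence infinite cyclic. The kernel consists of the mapping classes fixing both foliations together with their measures, and under the boundary condition on $\mathbb D_n$ this kernel is exactly the center $\langle\Delta^2\rangle\cong\mathbb Z$. Since $\Delta^2$ is central, the resulting extension $1\to\mathbb Z\to Z(\alpha)\to\mathbb Z\to 1$ is central, hence abelian; being torsion-free abelian of rank two, it must be $\mathbb Z^2$.

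The reducible, non-periodic case is where I expect the real work, and it is the step I would treat as the main obstacle. Here every $\beta\in Z(\alpha)$ preserves $\mathcal C_\alpha$ and therefore acts on the tube decomposition of Figure~\ref{F:External_braid}; collapsing each tube to a strand yields the homomorphism $p_\alpha\colon Z(\alpha)\to Z(\alpha_{ext})$. Its kernel consists of the mapping classes supported inside the tubes, and selecting one tube per cycle of the induced permutation (the normalization preceding the statement) identifies this kernel with $Z(\alpha_1)\times\cdots\times Z(\alpha_k)$. The delicate points are twofold. First, the image is only a \emph{finite index} subgroup $Z_0(\alpha_{ext})$ of $Z(\alpha_{ext})$: an external symmetry lifts to a symmetry of $\alpha$ only if it respects the combinatorial pattern of the internal braids, permuting tubes carrying conjugate internal data, and this constraint cuts out a finite-index subgroup. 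Second, the splitting, which I would produce by a canonical section lifting each external braid to a braid of tubes with trivial internal braids. Checking that this section is well defined and multiplicative, and pinning down $Z_0(\alpha_{ext})$ precisely, is the technical heart of the argument.
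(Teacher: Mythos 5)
First, a caveat about the comparison itself: the paper does not prove this statement at all --- it is quoted verbatim as Theorem 1.1 of \cite{GoWi04}, so there is no internal proof to compare against, and your proposal has to be measured against the actual argument of Gonz\'alez-Meneses--Wiest. Your skeleton (conjugacy-invariance of the Nielsen--Thurston type and of $CRS(\alpha)$, rotations in the periodic case, invariant foliations plus a scaling homomorphism in the pseudo-Anosov case, the tube decomposition in the reducible case) does match that argument in outline. But two of your steps are genuine gaps rather than routine verifications. In the periodic case, a nontrivial braid never has finite order ($B_n$ is torsion-free), so Ker\'ekj\'art\'o's theorem cannot be applied to $\alpha$ inside $B_n$: one must first pass to $B_n/\langle \Delta^2\rangle$, which sits in the mapping class group of the $(n+1)$-punctured sphere, realize the image there as a rigid rotation, and only then run the quotient (Birman--Hilden type) argument; note also that a rigid rotation does not fix $\partial \mathbb D_n$ pointwise, so the assertion ``a mapping class commutes with $\rho$ precisely when it descends'' must be formulated in that quotient, not in $B_n$.

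The serious gap is the splitting in the reducible case, which you defer rather than prove, and your proposed section is in fact wrong as stated. Lifting $\eta\in Z_0(\alpha_{ext})$ to the tubular braid with \emph{trivial} internal braids does not generally produce an element of $Z(\alpha)$: conjugating $\alpha$ by such a lift yields a braid with the same external part but with its internal braids permuted by $\eta$, and this equals $\alpha$ only if the permuted internal braids coincide exactly, whereas membership in $Z_0(\alpha_{ext})$ only guarantees that they agree up to conjugacy. A genuine section must therefore carry nontrivial internal braids --- conjugators $\rho_{ij}$ with $\rho_{ij}^{-1}\alpha_i\rho_{ij}=\alpha_j$ --- chosen coherently so that $s(\eta_1)s(\eta_2)=s(\eta_1\eta_2)$, and establishing that such coherent choices exist is precisely the hard part of \cite{GoWi04}; it cannot be dismissed as a check. (A minor further imprecision: in the pseudo-Anosov case the kernel of the scaling homomorphism need not be exactly $\langle \Delta^2\rangle$; it is an infinite cyclic group containing $\langle \Delta^2\rangle$ with finite index, possibly generated by a periodic root of $\Delta^2$. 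Your conclusion survives, because an automorphism of $\mathbb Z$ fixing a finite-index subgroup pointwise is the identity, so the extension is still central and $Z(\alpha)\cong\mathbb Z^2$.) As it stands, your text is a correct plan whose pivotal step is missing, not a proof.
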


The map $p_\alpha:\: Z(\alpha) \rightarrow Z(\alpha_{ext})$ mentioned above goes as follows: given $\beta\in Z(\alpha)$, one has $\beta^{-1}\alpha\beta=\alpha$. The canonical reduction system of a braid behaves in a natural way with respect to conjugations~\cite{BLM83}, meaning that $CRS(\beta^{-1}\alpha \beta)$ is the image of $CRS(\alpha)$ under the mapping class $\beta$. But $CRS(\beta^{-1}\alpha \beta)=CRS(\alpha)$, hence $\beta$ sends $CRS(\alpha)$ to itself, and then it sends $\mathcal C_{\alpha}$ to itself, too. Therefore, $\beta$ can be decomposed along $\mathcal C_{\alpha}$, and $p_\alpha(\beta)$ is just the external braid corresponding to this decomposition. Notice that we do not necessarily have $\mathcal C_\beta =\mathcal C_\alpha$ (actually, $\beta$ could even be periodic and $\mathcal C_\beta$ could be empty), so $p_\alpha(\beta)$ is not necessarily $\beta_{ext}$ (which could be not defined). $p_\alpha(\beta)$ is the {\it tubular braid} described by the action of $\beta$ on $\mathbb D_n$, by looking at the motion of the circles representing $\mathcal C_\alpha$ and of the punctures not enclosed by $\mathcal C_\alpha$.

\subsection{Centralizers and roots}

In this section we will state some further properties of centralizers and roots in the pure braid group $P_n$, that will be useful later.

\begin{proposition}\label{P:unique_roots}
Roots in $P_n$ are unique. That is, if $\alpha, \beta \in P_n$ are such that $\alpha^m=\beta^m$ for some $m\neq 0$, then $\alpha=\beta$.
\end{proposition}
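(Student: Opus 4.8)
The plan is to deduce uniqueness of roots from the fact that the pure braid group carries a \emph{two-sided invariant} total order. First I reduce to the case $m>0$: if $\alpha^m=\beta^m$ with $m<0$, taking inverses gives $\alpha^{-m}=\beta^{-m}$ with $-m>0$, so it suffices to treat positive exponents. The whole statement then follows from a soft order-theoretic observation applied to any bi-invariant order on $P_n$.

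Recall that a group $G$ is bi-orderable if it admits a total order $<$ invariant under left and right translations, i.e. $a<b$ implies $cad<cbd$ for all $c,d\in G$. I claim that in any bi-orderable group the $m$-th power map is strictly monotone, hence injective, for every $m\ge 1$. Indeed, the two invariances combine into a product rule: if $a<b$ and $c<d$ then $ac<bc<bd$ (right-translate the first inequality by $c$, left-translate the second by $b$), so $ac<bd$. Applying this rule inductively with $a=\alpha^k$, $b=\beta^k$, $c=\alpha$, $d=\beta$, starting from $\alpha<\beta$, yields $\alpha^{k+1}<\beta^{k+1}$ whenever $\alpha^k<\beta^k$; hence $\alpha<\beta$ forces $\alpha^m<\beta^m$ for all $m\ge 1$. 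Consequently, if $\alpha\neq\beta$ then by totality we may assume $\alpha<\beta$, whence $\alpha^m\neq\beta^m$; the contrapositive is exactly the assertion that $\alpha^m=\beta^m$ forces $\alpha=\beta$.

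Thus the only substantive input is that $P_n$ is bi-orderable. This is a classical fact: by Falk--Randell the lower central series of $P_n$ has torsion-free quotients and trivial intersection, so $P_n$ is residually torsion-free nilpotent, and every residually torsion-free nilpotent group is bi-orderable; alternatively one may invoke the explicit two-sided orderings of $P_n$ constructed by Rolfsen and collaborators. I regard establishing (or citing) this bi-orderability as the real content and the main obstacle, since it is precisely what distinguishes $P_n$ from the full braid group $B_n$, which for $n\ge 3$ is only left-orderable and indeed admits roots that are unique merely up to conjugacy. An alternative, more self-contained route would induct on $n$ through the Fadell--Neuwirth splitting $1\to F_{n-1}\to P_n\to P_{n-1}\to 1$: projecting $\alpha^m=\beta^m$ to $P_{n-1}$ and applying the inductive hypothesis gives equal images, reducing matters to an equation $\prod_{i=0}^{m-1}{}^{c^i}f_1=\prod_{i=0}^{m-1}{}^{c^i}f_2$ between twisted products in the free fibre $F_{n-1}$. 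The difficulty there is exactly the twisting by the $P_{n-1}$-action, which prevents this from being a bare root equation in a free group; handling it cleanly amounts to checking that the action preserves an ordering of the fibre, i.e. to rebuilding bi-orderability by hand. For this reason I would present the bi-orderable argument above as the proof.
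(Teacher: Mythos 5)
Your proof is correct, and it takes a genuinely different route from the paper's. The paper offers no argument of its own for this proposition: it simply cites \cite{Gon03} (roots in $B_n$ are unique up to conjugacy) and \cite{BoPa09}, results whose proofs rely on Nielsen--Thurston theory. You instead derive the statement as a soft order-theoretic consequence of bi-orderability: your product rule is sound (from $a<b$ and $c<d$, right-invariance gives $ac<bc$ and left-invariance gives $bc<bd$), it makes the $m$-th power map strictly monotone for $m\geq 1$, hence injective, and negative exponents reduce to positive ones by inverting both sides. The single external input, bi-orderability of $P_n$, is exactly the fact the paper itself invokes (citing \cite{RZ98}) in the proof of Lemma~\ref{vcd}, where the same style of order argument shows that a pure braid cannot be conjugated to its inverse inside $P_n$. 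So your route buys self-containedness and economy of dependencies --- it removes \cite{Gon03} and \cite{BoPa09} from the picture, using only a fact the paper needs anyway plus an elementary induction --- while the paper's citation buys brevity and defers to stronger theorems (conjugacy statements valid in all of $B_n$ and in mapping class groups) that are not needed elsewhere in the paper. Your side remark is also accurate and worth keeping: two-sided invariance is essential, since $B_n$ for $n\geq 3$ is left-orderable yet has non-unique roots, e.g. $(\sigma_1\sigma_2)^3=(\sigma_2\sigma_1)^3=\Delta^2$ in $B_3$ although $\sigma_1\sigma_2\neq\sigma_2\sigma_1$.
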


\begin{proof}
This can be obtained from the results in \cite{Gon03}, and can also be found in \cite{BoPa09}.
\end{proof}

\begin{corollary}\label{C:pure_centralizers}
If $\alpha \in P_n$ is a pure braid, $Z_{B_n}(\alpha)=Z_{B_n}(\alpha^m)$ for every $m\neq 0$.
\end{corollary}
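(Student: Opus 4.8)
The plan is to prove the two inclusions separately, the easy one being immediate and the other relying crucially on the uniqueness of roots in $P_n$ (Proposition \ref{P:unique_roots}). First, the inclusion $Z_{B_n}(\alpha)\subseteq Z_{B_n}(\alpha^m)$ is trivial: any $g$ commuting with $\alpha$ commutes with every power of $\alpha$, so this holds in any group with no reference to braids whatsoever.

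For the reverse inclusion, the key structural fact I would invoke is that $P_n$ is normal in $B_n$, being the kernel of the surjection $B_n\rightarrow \Sigma_n$. Suppose $g\in Z_{B_n}(\alpha^m)$, so that $g\alpha^m g^{-1}=\alpha^m$, and set $\beta:=g\alpha g^{-1}$. Since $\alpha\in P_n$ and $P_n$ is normal in $B_n$, we have $\beta\in P_n$ as well. Moreover $\beta^m=g\alpha^m g^{-1}=\alpha^m$, so $\beta$ and $\alpha$ are two pure braids sharing the same $m$-th power for some $m\neq 0$. At this point uniqueness of roots applies directly: by Proposition \ref{P:unique_roots}, $\beta=\alpha$, that is $g\alpha g^{-1}=\alpha$, whence $g\in Z_{B_n}(\alpha)$. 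This gives $Z_{B_n}(\alpha^m)\subseteq Z_{B_n}(\alpha)$, and combined with the first paragraph yields the desired equality.

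I do not foresee any genuine obstacle here; the entire content is the observation that conjugation keeps us inside $P_n$, which is exactly the hypothesis needed to convert uniqueness of roots into the coincidence of the $B_n$-centralizers of $\alpha$ and $\alpha^m$. The only point worth flagging is that one should not silently restrict to positive exponents: both the statement and Proposition \ref{P:unique_roots} are phrased for arbitrary $m\neq 0$, and the argument above goes through verbatim, since the identity $\beta^m=\alpha^m$ forces $\beta=\alpha$ irrespective of the sign of $m$.
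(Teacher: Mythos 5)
Your proof is correct and follows essentially the same route as the paper: the nontrivial inclusion is obtained by observing that $g\alpha g^{-1}$ and $\alpha$ are two pure $m$-th roots of $\alpha^m$ and invoking uniqueness of roots in $P_n$ (Proposition~\ref{P:unique_roots}). The only difference is that you make explicit the normality of $P_n$ in $B_n$ (needed to see that $g\alpha g^{-1}$ is pure), which the paper leaves implicit.
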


\begin{proof}
If $\gamma\in Z_{B_n}(\alpha^m)$ then $\gamma \alpha^m \gamma^{-1} = \alpha^{m}$, so $\gamma \alpha \gamma^{-1}$ and $\alpha$ are two pure $m$-th roots of the same pure braid, hence they are equal, so $\gamma\in Z_{B_n}(\alpha)$.
\end{proof}

\begin{corollary}\label{C:common_power_cyclic}
If $x\in P_n$ and $y\in B_n$ are such that $x^k=y^m$ for some nonzero integers $k$ and $m$, then the subgroup $\langle x,y\rangle\subset B_n$ is cyclic.
\end{corollary}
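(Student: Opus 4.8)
The plan is to first show that $x$ and $y$ commute, and then to identify $\langle x,y\rangle$ as a finitely generated torsion-free abelian group in which a nontrivial power relation forces cyclicity.

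First I would set $z:=x^k=y^m$ and observe that $z\in P_n$, since $x\in P_n$ and $P_n$ is a subgroup of $B_n$. The element $x$ is pure, so Corollary~\ref{C:pure_centralizers} applies to it with the nonzero exponent $k$, giving $Z_{B_n}(x)=Z_{B_n}(x^k)=Z_{B_n}(z)$. On the other hand $y$ commutes with its own power $y^m=z$, so $y\in Z_{B_n}(z)$. Combining the two inclusions yields $y\in Z_{B_n}(x)$, that is, $xy=yx$. Thus $\langle x,y\rangle$ is abelian.

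Next I would use that $\langle x,y\rangle$ is a subgroup of the torsion-free group $B_n$, hence itself torsion-free, and that it is generated by two elements; therefore it is a finitely generated torsion-free abelian group, i.e. free abelian of rank $r\in\{0,1,2\}$. To conclude it suffices to rule out $r=2$. Working additively and tensoring with $\Q$, the images $\bar x,\bar y$ of $x,y$ generate $\langle x,y\rangle\otimes\Q\cong\Q^{r}$ as a $\Q$-vector space, while the relation $x^k=y^m$ becomes $k\bar x-m\bar y=0$ with $k,m\neq 0$; this is a nontrivial linear dependence, so $\bar x$ and $\bar y$ span a subspace of dimension at most $1$. Hence $r\le 1$ and $\langle x,y\rangle$ is cyclic.

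The argument is short, and the point on which everything hinges is the commutativity step, which depends crucially on the uniqueness of roots in $P_n$ (Proposition~\ref{P:unique_roots}), as encapsulated in Corollary~\ref{C:pure_centralizers}; it is essential here that $x$ be pure, even though $y$ need not be. Once commutativity is established, the passage from a two-generated torsion-free abelian group with a nontrivial power relation to cyclicity is routine linear algebra over $\Q$, so I expect the centralizer identity, rather than this final reduction, to be the delicate part.
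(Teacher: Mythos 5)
Your proof is correct and follows essentially the same route as the paper's: commutativity of $x$ and $y$ via $Z_{B_n}(x)=Z_{B_n}(x^k)=Z_{B_n}(z)\ni y$ (resting on unique roots in $P_n$ through Corollary~\ref{C:pure_centralizers}), then cyclicity of the torsion-free abelian group $\langle x,y\rangle$ forced by the relation $x^k=y^m$. The only difference is that you spell out the final rank argument by tensoring with $\Q$, where the paper simply asserts that the relation rules out $\mathbb{Z}^2$.
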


\begin{proof}
Let $z = x^k = y^m$. Notice that $y\in Z_{B_n}(z) = Z_{B_n}(x^k) = Z_{B_n}(x)$ as $x$ is pure, hence $x$ and $y$ commute. As $B_n$ is torsion-free, then $\langle x,y\rangle$ is isomorphic to either $\mathbb Z^2$ or $\mathbb Z$. But since $x^k=y^m$, it must be isomorphic to $\mathbb Z$.
\end{proof}

\begin{proposition}\label{P:maximal_cyclic}
Every cyclic subgroup $\langle x\rangle \subset P_n$ is contained in a unique maximal cyclic subgroup.
\end{proposition}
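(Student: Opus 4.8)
The plan is to study the poset $\mathcal{S}$ of all cyclic subgroups of $P_n$ containing $\langle x\rangle$ (I may assume $x\neq 1$, the trivial subgroup being contained in every maximal cyclic subgroup so that uniqueness would fail for it). The crucial observation is that $\mathcal{S}$ is \emph{directed}: if $\langle y\rangle,\langle z\rangle\in\mathcal{S}$ then $x=y^{a}=z^{b}$ for some nonzero $a,b$, so $y^{a}=z^{b}$; since $y\in P_n$ is pure, Corollary~\ref{C:common_power_cyclic} shows that $\langle y,z\rangle$ is cyclic, and it clearly contains $\langle x\rangle$, hence lies in $\mathcal{S}$ and bounds both $\langle y\rangle$ and $\langle z\rangle$ from above.

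This directedness disposes of uniqueness for free: if $M\in\mathcal{S}$ is maximal, then any $S\in\mathcal{S}$ admits a common upper bound $U\in\mathcal{S}$ with $M,S\subseteq U$, and maximality forces $U=M$, so $S\subseteq M$. Thus a maximal element of $\mathcal{S}$ is automatically the unique maximum, and the whole statement reduces to proving that $\mathcal{S}$ has a maximal element. Since a strictly ascending chain $\langle x\rangle=\langle y_0\rangle\subsetneq\langle y_1\rangle\subsetneq\cdots$ forces $x=y_k^{m_k}$ with $|m_k|\geq 2^{k}$ (each strict inclusion multiplies the exponent by a factor of absolute value at least $2$), it is enough to bound the set of positive integers $d$ for which $x$ admits a $d$-th root in $P_n$.

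Obtaining this divisibility bound is, I expect, the technical heart of the argument. My approach is to use the lower central series $\{\gamma_k(P_n)\}$: the group $P_n$ is residually torsion-free nilpotent with free abelian quotients $\gamma_k/\gamma_{k+1}$ of finite rank (Falk--Randell/Kohno), so for $x\neq 1$ there is a largest $c$ with $x\in\gamma_c(P_n)$, and the class $\bar{x}\in\gamma_c/\gamma_{c+1}\cong\mathbb{Z}^{r_c}$ is nonzero. If $w^{d}=x$, a short argument using torsion-freeness of these quotients shows that $w$ must also lie in $\gamma_c$ (otherwise $w^{d}$ would escape $\gamma_c$), whence $\bar{x}=d\,\bar{w}$ and $d$ divides the content of $\bar{x}$, which is a fixed positive integer. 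The case $c=1$ is just the abelianization $P_n^{ab}\cong\mathbb{Z}^{\binom{n}{2}}$ given by linking numbers. As an alternative more in the spirit of this paper, one can run a Nielsen--Thurston case analysis, using that the only periodic pure braids are the powers of $\Delta^2$ (for which $\langle\Delta^2\rangle$ is plainly maximal), that pseudo-Anosov dilatations in $B_n$ are bounded away from $1$, and induction on the number of strands through the internal and external braids in the reducible case. Either route supplies the bound, after which everything else is formal once directedness is in hand.
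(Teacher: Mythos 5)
Your proof is correct, but its technical core takes a genuinely different route from the paper's. The uniqueness half is the same in both: the paper also deduces it from Corollary~\ref{C:common_power_cyclic}, and your directedness formulation merely makes explicit why a maximal element of the poset is automatically the unique maximum (a worthwhile clarification, as is your remark that one must take $x\neq 1$). The difference is in existence, i.e.\ in bounding the set of $d$ for which $x$ admits a $d$-th root in $P_n$. The paper argues by the Nielsen--Thurston trichotomy: for $x$ periodic the maximal subgroup is $\langle\Delta^2\rangle$, since the periodic pure braids are exactly the powers of $\Delta^2$; for $x$ pseudo-Anosov, a $k$-th root has dilatation $\lambda^{1/k}$ and discreteness of the dilatation spectrum (Arnoux--Yoccoz) bounds $k$; for $x$ reducible non-periodic, a pure $k$-th root of $x$ yields a pure $k$-th root of $x_{ext}$, so the bound for the periodic or pseudo-Anosov external braid bounds $k$ --- this is exactly the alternative you sketch at the end. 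Your main route instead uses the lower central series: since $\bigcap_k\gamma_k(P_n)=1$ and each $\gamma_k/\gamma_{k+1}$ is free abelian of finite rank (Falk--Randell, Kohno), a nontrivial $x$ lies in a deepest term $\gamma_c$, any root $w$ of $x$ lies in $\gamma_c$ as well (if $w\in\gamma_{c'}\setminus\gamma_{c'+1}$ with $c'<c$, then $d\bar w\neq 0$ in the torsion-free quotient, so $w^d\notin\gamma_{c'+1}\supseteq\gamma_c$), and then $d$ divides the content of the nonzero class $\bar x$. This is sound. What each approach buys: yours is more general and uses no surface topology at all --- it proves the statement for any group that is residually nilpotent with torsion-free finitely generated lower central quotients --- but it imports the Falk--Randell/Kohno results, which the paper never otherwise needs; the paper's proof stays within the Nielsen--Thurston machinery it has already set up and reuses elsewhere (Lemma~\ref{puregenerator}, Proposition~\ref{P:Centralizer_in_Pn}), at the price of being specific to braid and mapping class groups.
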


\begin{proof}
Suppose first that $x$ is periodic. The set of periodic pure braids is precisely the cyclic group $\langle \Delta^2\rangle$. Therefore, $\langle \Delta^2\rangle$ is the desired maximal cyclic subgroup. Now suppose that $x$ is pseudo-Anosov, with dilatation factor $\lambda$. A $k$-th root of $x$ will then be pseudo-Anosov, with dilatation factor $\sqrt[k]{\lambda}$. As the set of possible dilatation factors is discrete~\cite{AY81}, the set of positive integers $k$ for which $x$ admits a $k$-th root in $P_n$ is finite. The $k$-th root of $x$ for which $k$ is maximal is then a generator for a maximal cyclic subgroup containing $\langle x\rangle$. Finally, suppose that $x$ is reducible non-periodic. Then $x_{ext}$ is either periodic or pseudo-Anosov, and it is also a pure braid (on fewer strands). If $\alpha$ is a pure $k$-th root of $x$, then $\alpha_{ext}$ is a pure $k$-th root of $x_{ext}$ (the canonical reduction system is preserved by taking powers, so $CRS(\alpha)=CRS(x)$, and the external braids are just obtained by emptying the tubes). Hence, as there is a maximal $k$ such that a $k$-th root of $x_{ext}$ exists, there is also a maximal $k$ for which a $k$-th root of $x$ exists. Such $k$-th root is a generator of the desired cyclic subgroup containing $\langle x\rangle$.

The uniqueness of the maximal cyclic subgroup follows from Corollary~\ref{C:common_power_cyclic}.
\end{proof}

\begin{proposition}\label{P:Centralizer_in_Pn}
Let $\alpha\in P_n$ with $n\geq 2$. The centralizer $Z_{P_n}(\alpha)$ is as follows:
\begin{itemize}

  \item If $\alpha$ is periodic, $Z_{P_n}(\alpha)=Z_{P_n}(\langle \Delta^2\rangle)=P_n$.

  \item If $\alpha$ is pseudo-Anosov, $Z_{P_n}(\alpha)\simeq \mathbb Z^2$, consisting of all pure braids admitting a representative which preserves the same two foliations as $\alpha$.

  \item If $\alpha$ is reducible, $Z_{P_n}(\alpha)\simeq Z_{P_{m_1}}(\alpha_1)\times \cdots \times Z_{P_{m_k}}(\alpha_k) \times Z_{P_m}(\alpha_{ext})$, where $\alpha_1,\ldots,\alpha_k$ and $\alpha_{ext}$ are the internal and external braids, respectively, defined for $\alpha$. Moreover, $m_1+\cdots+m_k+m= n+k$.
\end{itemize}
Therefore, in any case, $Z_{P_n}(\alpha)\simeq P_{t_1}\times \cdots \times P_{t_r}\times \mathbb Z^t$ for some $r\geq 0$, where $t_1+\cdots +t_r+t\leq n+r-1$.
\end{proposition}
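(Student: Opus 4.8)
The plan is to deduce the pure centralizer from the full one through the identity $Z_{P_n}(\alpha)=Z_{B_n}(\alpha)\cap P_n$, treating each Nielsen--Thurston type separately and leaning throughout on Theorem~\ref{T:centralizer_Bn}. The two irreducible types are quick. If $\alpha$ is periodic and pure then, as observed in the proof of Proposition~\ref{P:maximal_cyclic}, $\alpha\in\langle\Delta^2\rangle$, which is central; hence $Z_{B_n}(\alpha)=B_n$ and $Z_{P_n}(\alpha)=B_n\cap P_n=P_n=Z_{P_n}(\langle\Delta^2\rangle)$. If $\alpha$ is pseudo-Anosov, Theorem~\ref{T:centralizer_Bn} gives $Z_{B_n}(\alpha)\simeq\mathbb Z^2$; since $P_n$ has finite index in $B_n$, the subgroup $Z_{P_n}(\alpha)=Z_{B_n}(\alpha)\cap P_n$ has finite index in $\mathbb Z^2$ and is therefore itself $\simeq\mathbb Z^2$, consisting precisely of the pure elements of $Z_{B_n}(\alpha)$, i.e. the pure braids preserving the two foliations of $\alpha$.

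The reducible case is the heart of the matter, and I would run it by intersecting the split short exact sequence of Theorem~\ref{T:centralizer_Bn} with $P_n$. First, because $\alpha$ is pure it fixes every puncture, hence fixes each outermost curve of $\mathcal{C}_\alpha$ individually; thus the permutation induced on the tubes is trivial, there is exactly one internal braid per tube (so $k$ equals the number of tubes), and $\alpha_{ext},\alpha_1,\dots,\alpha_k$ are themselves pure braids on fewer strands. Next, for $\beta\in Z_{B_n}(\alpha)$ decomposed along $\mathcal{C}_\alpha$ one checks the criterion: $\beta$ is pure if and only if its tubular braid $p_\alpha(\beta)$ is pure in $B_m$ (it fixes the free punctures and sends each tube to itself) and each of its internal braids is pure. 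Intersecting the sequence with $P_n$ then yields
$$1\longrightarrow Z_{P_{m_1}}(\alpha_1)\times\cdots\times Z_{P_{m_k}}(\alpha_k)\longrightarrow Z_{P_n}(\alpha)\stackrel{p_\alpha}{\longrightarrow} Z_{P_m}(\alpha_{ext})\longrightarrow 1,$$
where to identify the image I would use that a pure external braid trivially preserves the colouring of the strands by their internal braids, so $Z_{B_m}(\alpha_{ext})\cap P_m\subseteq Z_0(\alpha_{ext})$ and hence the image is all of $Z_{P_m}(\alpha_{ext})$; the splitting descends from the section of Theorem~\ref{T:centralizer_Bn}, which sends a pure external braid (tubes filled trivially) to a pure braid.

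It then remains to upgrade this split extension to the \emph{direct} product asserted in the statement, i.e. to show that the conjugation action of the pure external part on $\prod_i Z_{P_{m_i}}(\alpha_i)$ is trivial. This is where purity is decisive: a pure tubular braid carries each tube around a loop and returns it to itself, so it permutes the internal factors trivially and acts on each factor by conjugation by a power of the full twist $\Delta^2_{m_i}$ of that tube --- the only framing discrepancy an orientation-preserving isotopy can create. Since $\Delta^2_{m_i}$ is central in $B_{m_i}$, this conjugation is trivial, and the extension is the claimed direct product. I expect this framing/centrality point to be the main obstacle, together with the careful verification of the ``pure if and only if'' criterion and of the equality $Z_{B_m}(\alpha_{ext})\cap P_m=Z_0(\alpha_{ext})\cap P_m$. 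The relation $m_1+\cdots+m_k+m=n+k$ then follows from the combinatorics of the decomposition, since each of the $k$ tubes, upon collapsing, trades its enclosed punctures for a single strand of the external braid.

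Finally, the normal form $Z_{P_n}(\alpha)\simeq P_{t_1}\times\cdots\times P_{t_r}\times\mathbb Z^t$ with $t_1+\cdots+t_r+t\le n+r-1$ I would prove by induction on $n$. The periodic case contributes $P_n$ ($r=1$, $t=0$, and $n\le n$), and the pseudo-Anosov case contributes $\mathbb Z^2$ ($r=0$, $t=2$, with $2\le n-1$ since pseudo-Anosov braids force $n\ge 3$). In the reducible case every $m_i$ and $m$ is strictly less than $n$ (essential curves enclose between $2$ and $n-1$ punctures), so the inductive hypothesis applies to each factor; writing $r=\sum_i r_i+q$ for the total number of pure-braid factors and summing the individual bounds gives $\sum t + t\le\sum_i(m_i+r_i-1)+(m+q-1)=(n+k)+r-(k+1)=n+r-1$, which closes the induction.
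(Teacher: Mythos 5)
Your proof is correct and follows essentially the same route as the paper: every case is handled by intersecting the structure of $Z_{B_n}(\alpha)$ given in Theorem~\ref{T:centralizer_Bn} with $P_n$, with purity forcing the induced permutation of tubes to be trivial (so pure external elements lie in $Z_0(\alpha_{ext})$ and the semidirect product collapses to the direct product), and the strand count $m_1+\cdots+m_k+m=n+k$ coming from the same combinatorics of collapsing tubes. The only cosmetic differences are your finite-index argument in the pseudo-Anosov case (the paper instead observes that $Z_{P_n}(\alpha)$ contains both a pseudo-Anosov and a periodic element, hence cannot be cyclic) and your packaging of the final bound as an induction on $n$ (the paper iterates the decomposition until all pieces are periodic or pseudo-Anosov and counts directly); both variants are sound.
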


\begin{proof}
The periodic case is clear, as the only periodic pure braids are the powers of $\Delta^2$. If $\alpha$ is pseudo-Anosov, it follows from Theorem~\ref{T:centralizer_Bn} that $Z_{P_n}(\alpha)$ is the subgroup of $Z_{B_n}(\alpha)\simeq \mathbb Z^2$ consisting of pure braids admitting a representative which preserves the same two foliations as $\alpha$. Now $Z_{P_n}(\alpha)$ cannot be cyclic, as it contains a pseudo-Anosov braid ($\alpha$) and a periodic braid ($\Delta^2$), which cannot have a common power. Hence $Z_{P_n}(\alpha)\simeq \mathbb Z^2$ in this case.

Suppose finally that $\alpha$ is reducible.  By Theorem~\ref{T:centralizer_Bn}, we have that
$$
  Z_{B_n}(\alpha)=\left(Z_{B_{m_1}}(\alpha_1)\times \cdots \times Z_{B_{m_k}}(\alpha_k)\right)\rtimes Z_0(\alpha_{ext}).
$$
Notice that $\alpha_1,\ldots,\alpha_k$ and $\alpha_{ext}$ are pure braids, and that $m_i$ is the number of strands of the braid $\alpha_i$. Recall also that $Z_0(\alpha_{ext})\subset Z_{B_m}(\alpha_{ext})$, where $m$ is the number of strands of $\alpha_{ext}$.

Now $Z_{P_n}(\alpha)= Z_{B_n}(\alpha)\cap P_n$, hence we need to describe the pure elements in the above semidirect product. The left factor just becomes $Z_{P_{m_1}}(\alpha_1)\times \cdots \times Z_{P_{m_k}}(\alpha_k)$. For the second factor, we need to describe the subgroup $Z_0(\alpha_{ext})$ with more detail, as it is done in \cite{GoWi04}. The group $Z_{B_m}(\alpha_{ext})$ (whose elements can be seen in $B_n$ as braids made of strands and tubes, with trivial braids inside the tubes) acts on $Z_{B_{m_1}}(\alpha_1)\times \cdots \times Z_{B_{m_k}}(\alpha_k)$ by permuting coordinates (conjugating in $B_n$ by the tubular braid, just permutes the tubes). Then $Z_0(\alpha_{ext})$ is the subgroup formed by the elements whose induced permutation acts as follows: if $i$ is sent to $j$, then $\alpha_i$ is conjugate to $\alpha_j$.

Now take a pure element in $Z_{B_{m}}(\alpha_{ext})$. Its induced permutation is trivial, hence it belongs to $Z_0(\alpha_{ext})$. This means that $Z_0(\alpha_{ext})\cap P_m = Z_{B_{m}}(\alpha_{ext})\cap P_m = Z_{P_{m}}(\alpha_{ext})$. Moreover, the action of this group on $Z_{P_{m_1}}(\alpha_1)\times \cdots \times Z_{P_{m_k}}(\alpha_k)$ is trivial. Hence the semidirect product becomes a direct product when restricted to $P_n$, and hence:
$$
  Z_{P_n}(\alpha)=Z_{P_{m_1}}(\alpha_1)\times \cdots \times Z_{P_{m_k}}(\alpha_k)\times Z_{P_m}(\alpha_{ext}).
$$

Now recall that $m_1,\ldots,m_k$ and $m$ are, respectively, the number of strands of $\alpha_1,\ldots,\alpha_k$ and $\alpha_{ext}$. By the decomposition of $\alpha$ into the internal braids $\alpha_1,\ldots,\alpha_k$ and the external braid $\alpha_{ext}$, it follows that the punctured disc $\mathbb D_n$ is obtained from the punctured disc $\mathbb D_m$ by replacing $k$ punctures with the $k$ punctured discs $\mathbb D_{m_1},\ldots, \mathbb D_{m_k}$. At each replacement, a puncture is replaced by $m_i$ punctures. Hence $n=m+(m_1-1)+\cdots +(m_k-1)$, so $m_1+\cdots + m_k + m= n+k$ as we wanted to show. We remark that if $\alpha$ is not pure, this equality becomes an inequality, $m_1+\cdots + m_k+ m\leq n+k$, as the permutation induced by $\alpha$ on the tubes may contain nontrivial cycles, and there is just one internal braid for each cycle. But in our case, as $\alpha$ is pure, the equality holds.

Notice that, if we define $r^*=r-1$ for every positive integer $r$, the above equality reads $m_1^*+\cdots+m_k^*+m^*=n^*$.

Now if some $\alpha_i$ is reducible an not periodic, we can decompose $Z_{P_{m_i}}(\alpha_i)$ into a direct product of pure centralizers. We can keep doing this until all braids involved are either periodic or pseudo-Anosov (recall that at every iteration the braids involved have fewer strands, and in $B_2$ all braids are periodic). Hence we will obtain
$$
    Z_{P_n}(\alpha)=Z_{P_{t_1}}(\beta_1)\times \cdots \times Z_{P_{t_s}}(\beta_s),
$$
where we can assume that $\beta_1,\ldots,\beta_r$ are periodic, and $\beta_{r+1},\ldots,\beta_s$ are pseudo-Anosov. Also, by induction on the number of strands it follows that $t_1^*+\cdots+t_s^*=n^*$, that is, $t_1+\cdots +t_s=n+s-1$. But then
$$
    Z_{P_n}(\alpha)=P_{t_1}\times \cdots \times P_{t_r} \times \mathbb Z^{2(s-r)}.
$$
Let $t=2(s-r)$. As every pseudo-Anosov braid has at least 3 strands, one has $3(s-r)\leq t_{r+1}+\cdots+t_s$, so $t_1+\cdots +t_r +t = t_1+\cdots +t_r+ 3(s-r)-(s-r) \leq t_1+\cdots+ t_s -(s-r) = n+s-1 -(s-r)= n+r-1$.
\end{proof}

Concerning the geometric dimension of $B_n$, the case of proper actions is well known for mapping class groups in general: given a mapping class group $\Gamma^b_{g,n}$ of a connected orientable surface of genus $g$, with $n$ punctures and $b$ boundary components, it is known that $\gdp \Gamma^b_{g,n}=\text{vcd}\ \Gamma^b_{g,n}$, where $\text{vcd}\: G$ is the virtual cohomological dimension of a group $G$. This follows from Harer \cite{Har86} for $b>0$, has been showed by Hensel, Osajda, and Przytycki \cite{HOP16} when $b=0$ and $n>0$, and by Aramayona and Mart\'{\i}nez \cite{ArMa16} when $b=0$ and $n=0$.

The virtual cohomological dimension of $\Gamma^b_{g,n}$ was computed by Harer in the mentioned paper. The case we are interested in is $B_n = \Gamma^1_{0,n}$, and by Harer's formula we obtain:
$$
 \gdp B_n= \text{vcd}\ B_n = n-1.
$$

This equality is also consequence of the results in (\cite{Arn70}, section 3).

In this paper we are interested in $\gd B_n$. We can easily find a lower bound thanks to the following result.

\begin{proposition}\label{P:abelian_inside}
The pure braid group $P_n$ (and hence $B_n$) contains a subgroup isomorphic to $\mathbb Z^{n-1}$.
\end{proposition}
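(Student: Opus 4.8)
The plan is to exhibit an explicit copy of $\mathbb{Z}^{n-1}$ inside $P_n$ using the \emph{full twists on initial segments of strands}. For $2\leq k\leq n$, let $B_k\leq B_n$ be the subgroup generated by $\sigma_1,\ldots,\sigma_{k-1}$, which braids only the first $k$ strands while leaving strands $k+1,\ldots,n$ fixed; this is the standard injection $B_k\hookrightarrow B_n$. Let $T_k:=\Delta_k^2\in B_k$ denote the full twist on the first $k$ strands, i.e.\ the generator of the infinite cyclic center $Z(B_k)$. Since $T_k$ induces the trivial permutation, it is a pure braid, so $T_2,\ldots,T_n$ are $n-1$ elements of $P_n$.

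First I would check that these elements commute. For $j<k$ we have $T_j\in B_j\leq B_k$, and $T_k$ is central in $B_k$; hence $T_k$ commutes with $T_j$. Thus $A:=\langle T_2,\ldots,T_n\rangle$ is an abelian group generated by $n-1$ elements, and the only remaining task is to show that these generators are \emph{independent}, so that $A\cong\mathbb{Z}^{n-1}$ rather than a proper quotient.

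For the independence I would use the linking-number homomorphisms. For $1\le i<j\le n$ let $\ell_{ij}\colon P_n\to\mathbb{Z}$ be the total linking number of the $i$-th and $j$-th strands; these are the components of the abelianization map $P_n\to\mathbb{Z}^{\binom{n}{2}}$. A full twist $T_k$ links two strands exactly when both of them lie among the first $k$, so $\ell_{1j}(T_k)=1$ if $j\le k$ and $\ell_{1j}(T_k)=0$ otherwise. Consider the homomorphism $(\ell_{12},\ldots,\ell_{1n})\colon A\to\mathbb{Z}^{n-1}$: the associated $(n-1)\times(n-1)$ matrix $\big(\ell_{1j}(T_k)\big)_{2\le j,k\le n}$ is upper triangular with $1$'s on the diagonal, hence lies in $GL_{n-1}(\mathbb{Z})$. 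Therefore this homomorphism is surjective and the images of $T_2,\ldots,T_n$ form a basis of $\mathbb{Z}^{n-1}$. Being abelian, generated by $n-1$ elements, and surjecting onto $\mathbb{Z}^{n-1}$ (a surjective endomorphism of $\mathbb{Z}^{n-1}$ is an isomorphism), the group $A$ is itself free abelian of rank $n-1$. Since $P_n\leq B_n$, the group $B_n$ contains $\mathbb{Z}^{n-1}$ as well.

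The argument is short, and the only inputs are two standard facts about braids: that $B_k\hookrightarrow B_n$ is injective with $T_k=\Delta_k^2$ generating its center, and that the $\ell_{ij}$ are well-defined homomorphisms computing the abelianization of $P_n$. Neither is an obstacle. The one point that genuinely does the work is the triangularity of the linking-number matrix in the independence step, which is precisely what upgrades ``abelian and generated by $n-1$ elements'' to ``isomorphic to $\mathbb{Z}^{n-1}$''; everything else is formal.
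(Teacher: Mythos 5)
Your proof is correct and follows essentially the same strategy as the paper's: exhibit $n-1$ explicit commuting pure braids and certify their independence by mapping them, via linking-number homomorphisms, onto a basis of $\mathbb{Z}^{n-1}$. The differences are minor: the paper uses the generators $a_i=(\sigma_i\cdots\sigma_{n-1})(\sigma_{n-1}\cdots\sigma_i)$, whose commutativity must be checked by hand but whose linking-number matrix is the identity, whereas your nested full twists $\Delta_k^2$ get commutativity for free from centrality at the price of a triangular matrix (note only the harmless slip that for $k=2$ the center of $B_2$ is generated by $\sigma_1$, not by $\Delta_2^2=\sigma_1^2$; centrality and purity of $\Delta_2^2$ are all you actually use).
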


\begin{proof}
We define explicit generators, $a_1,\ldots,a_{n-1}$, as follows:
$$
  a_i=(\sigma_i\sigma_{i+1}\cdots \sigma_{n-1})(\sigma_{n-1}\cdots \sigma_{i+1}\sigma_i).
$$
One can check that these elements commute with each other just by drawing the corresponding braids, or by applying the braid relations. As $P_n$ is torsion-free, these elements generate a free abelian subgroup of $P_n$. To check that its rank is $n-1$, just define a projection $\pi: P_n \rightarrow \mathbb Z^{n-1}$ which sends a pure braid $\alpha$ to $\pi(\alpha)=(x_1,\ldots,x_{n-1})$, where $x_i$ is the algebraic number of crossings of the strands $i$ and $n$ in the braid $\alpha$, divided by 2. This projection is a well defined homomorphism, and its restriction to $\langle a_1,\ldots,a_{n-1}\rangle$ is surjective as $\pi(a_i)=(0,\ldots,0,1,0,\ldots,0)$ with the 1 in position $i$.
\end{proof}

\begin{corollary}\label{C:lower_bound}
$\gd B_2 = \gd P_2 = 0$, and $\gd B_n\geq \gd P_n \geq n$ for $n\geq 3$.
\end{corollary}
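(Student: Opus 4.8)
The plan is to establish the two claims separately. For the base case $n=2$, I would use that $B_2 \simeq \mathbb{Z}$ is itself infinite cyclic, hence virtually cyclic, so $B_2 \in \mathcal{F}_{vc}(B_2)$ and a single point serves as a model for $\E B_2$; therefore $\gd B_2 = 0$. Since $P_2 = B_2 \simeq \mathbb{Z}$ as well (the map $B_2 \to \Sigma_2$ has image generated by a transposition, and $P_2$ is the index-two subgroup, still infinite cyclic), the same reasoning gives $\gd P_2 = 0$.

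For the main inequality with $n \geq 3$, I would first invoke the monotonicity principle recorded just after Corollary \ref{C:cotas_suficientes}: for any subgroup $H < G$ and any family $\mathcal{F}$ closed under conjugation and subgroups, one has $\gd_{\mathcal{F} \cap H} H \leq \gd_{\mathcal{F}} G$. Applying this with $H = P_n < G = B_n$ and $\mathcal{F} = \mathcal{F}_{vc}(B_n)$, and noting that $\mathcal{F}_{vc}(B_n) \cap P_n = \mathcal{F}_{vc}(P_n)$ (the virtually cyclic subgroups of $B_n$ contained in $P_n$ are exactly the virtually cyclic subgroups of $P_n$), yields immediately $\gd P_n \leq \gd B_n$. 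This disposes of the first inequality.

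It remains to prove $\gd P_n \geq n$ for $n \geq 3$, and this is the substantive step. The idea is to exhibit a subgroup of $P_n$ whose virtually-cyclic geometric dimension is known to be $n$, and again push up via monotonicity. By Proposition \ref{P:abelian_inside}, $P_n$ contains a copy of $\mathbb{Z}^{n-1}$. The key input is the known value of $\gd \mathbb{Z}^{d}$ for free abelian groups: the geometric dimension of $\mathbb{Z}^d$ with respect to the family of virtually cyclic subgroups is $d+1$ for $d \geq 2$ (this is a standard computation, appearing in the work of L\"uck--Weiermann and related sources on free abelian groups). Taking $d = n-1 \geq 2$ gives $\gd \mathbb{Z}^{n-1} = n$. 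Since $\mathbb{Z}^{n-1} < P_n$ and $\mathcal{F}_{vc}(P_n) \cap \mathbb{Z}^{n-1} = \mathcal{F}_{vc}(\mathbb{Z}^{n-1})$, the monotonicity principle gives
$$
n = \gd \mathbb{Z}^{n-1} \leq \gd P_n,
$$
as required.

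The main obstacle is justifying the value $\gd \mathbb{Z}^{n-1} = n$, since it is the only nontrivial numerical input. The inequality $\gd \mathbb{Z}^d \leq d+1$ follows from constructing an explicit $(d+1)$-dimensional model via the L\"uck--Weiermann pushout of Theorem \ref{maintheorem} (using $\gd \mathbb{Z}^d = d$ for the proper part and controlling the commensurator contributions, each of which is essentially a copy of $\mathbb{Z}^{d-1}$-indexed data), together with Corollary \ref{C:cotas_suficientes}. The reverse inequality $\gd \mathbb{Z}^d \geq d+1$ is the harder direction and is where the family of virtually cyclic subgroups genuinely increases the dimension beyond the proper case; it can be obtained from a Bredon-cohomology computation showing $\mathrm{cd}_{\mathcal{F}_{vc}} \mathbb{Z}^d = d+1$, after which Proposition \ref{Eilenberg} upgrades this to the geometric statement (noting $d+1 \geq 3$ for $d \geq 2$, so the Eilenberg--Ganea ambiguity does not arise). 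I would cite the existing literature for this computation rather than reproduce it.
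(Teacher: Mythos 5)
Your proposal is correct and follows essentially the same route as the paper: dispose of $n=2$ via $B_2\simeq P_2\simeq\mathbb{Z}$ being virtually cyclic, get $\gd P_n\leq \gd B_n$ from subgroup monotonicity, and obtain the lower bound by combining the copy of $\mathbb{Z}^{n-1}$ inside $P_n$ (Proposition \ref{P:abelian_inside}) with the known value $\gd \mathbb{Z}^{n-1}=n$, which the paper likewise cites from the literature (Example 5.21 in \cite{LuWe12}) rather than reproving. The only nitpick is that $P_2$ is an index-two subgroup of $B_2$, not equal to it, but since both are infinite cyclic your conclusion is unaffected.
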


\begin{proof}
First, $B_2\simeq P_2 \simeq \mathbb Z$, so $\gd B_2 = \gd P_2 = 0$. Assume $n\geq 3$. The first inequality holds as $P_n$ is a subgroup of $B_n$.  Finally, as $\gd \mathbb Z^{n-1}=n$ for $n\geq 3$ (Example 5.21 in \cite{LuWe12}), the second inequality follows from Proposition~\ref{P:abelian_inside}.
\end{proof}

Some upper bounds for $\gd B_n$ are already known. In \cite{JuTr16} such a bound is given for any mapping class group $\Gamma^b_{g,n}$ of a compact orientable surface with punctures and negative Euler characteristic. In the particular case of braid groups with $n\geq 2$ (in order to have negative Euler characteristic) and $m\geq 3$ this bound is:
$$
    \gd B_n \leq [B_n: B_n[m]] \cdot (\text{vcd}\: B_n +1) = [B_n: B_n[m]]\cdot n,
$$
where $B_n[m]$ is the {\em $m$-th congruence subgroup} of $B_n$, that is the subgroup of elements which act trivially on $H_1(\mathbb D_n, \mathbb Z/m)$.

Notice that the fundamental group of $\mathbb D_n$ is the free group $F_n$ of rank $n$, hence the first homology group is $H_1(\mathbb D_n, \mathbb Z/m)\simeq (\mathbb Z/m)^n$. It is well known~\cite{Bir74} that the action of the braid group (by automorphisms) on the fundamental group of $\mathbb D_n$ coincides with the Artin representation $B_n\to \mbox{Aut}(F_n)$, (see~\cite{Art47}). The action induced by this homomorphism on the abelianization of $F_n$ sends a braid $\alpha$ to the automorphism of $\mathbb Z^n$ which just permutes the coordinates in the same way as $\alpha$ permutes its strands. Therefore, the action of $\alpha \in B_n$ on $H_1(\mathbb D_n, \mathbb Z/m)$ is just given by permuting the coordinates in the same way as $\alpha$ permutes its strands. This means that the elements which induce the trivial action are precisely the pure braids. Hence $B_n[m]=P_n$ for every $m\geq 2$.

It follows that $[B_n: B_n[m]]= [B_n: P_n] = n!$ Hence, the above upper bound for $\gd B_n$ is $n!\: n$.  The goal of this paper is to show that $\gd B_n=n$.

\section{Commensurators}
\label{Commensurators}
The main tool that we are going to use to compute geometric dimensions with respect to the family of virtually cyclic groups is L\"{u}ck-Weiermann's Theorem \ref{maintheorem}, and a necessary step in our calculation will be to describe the commensurators of the virtually cyclic subgroups generated by periodic or pseudo-Anosov braids. This task will be undertaken in this section, while in the last one we will use different tools to compute the dimensions in the reducible non-periodic case. We start by reviewing some facts concerning commensurators that will be needed in the rest of the paper.

\begin{definition}
\label{Commdefo}
Given an inclusion $H<G$ of groups, the \emph{commensurator} of $H$ in $G$ is the group:
$$
\emph{Comm}_G(H)=\{x\in G|\ [H:H\cap H^x]<\infty \emph{ and }[H^x:H\cap H^x]<\infty\}.
$$

\end{definition}

We will be mainly interested in the case in which $H$ is a virtually cyclic subgroup of $G$. In this case, the previous definition is equivalent to:
$$\textrm{Comm}_G(H)=\{x\in G |\ |H\cap H^x|=\infty\}.$$

Note that this definition implies that, given the equivalence relation defined just before Definition~\ref{eqclass} over the virtually cyclic infinite subgroups of a group, and $[H]$ an equivalence class, $N_G[H]=\CommH$.

When computing the geometric dimension of $B_n$, we will make frequent use of the following result of Degrijse-Petrosyan:






\begin{theorem}\label{T:gdFH=gdpCommH/K} {\rm (\cite{DePe14}, 4.2)} If $H<G$ is virtually cyclic, and such that $H\simeq\mathbb{Z}$ and $H\unlhd \emph{Comm}_G(H)$, then $\text{\rm gd}_{{\cal{F}}[H]}(\emph{Comm}_G(H))=\gdp(\emph{Comm}_G(H) /H)$.
\end{theorem}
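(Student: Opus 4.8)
The plan is to recognize $\F[H]$ as a family pulled back along a quotient map and then to play the two standard equivariant constructions (inflation and taking fixed points) against each other. Write $C:=\Comm_G(H)=N_G[H]$, and let $q\colon C\to Q:=C/H$ be the projection, which is a genuine group homomorphism precisely because $H\unlhd C$. The key reduction I would establish first is that a subgroup $K\leq C$ lies in $\F[H]$ if and only if $q(K)$ is finite; here the hypothesis $H\cong\Z$ is essential. If $K$ is finite, then $K\cap H$ is a finite subgroup of $H\cong\Z$, hence trivial, so $q|_K$ is injective and $q(K)$ is finite. If $K\sim H$, then $K\cap H$ is an infinite subgroup of the infinite virtually cyclic group $K$; since every infinite subgroup of an infinite virtually cyclic group has finite index, $q(K)=K/(K\cap H)$ is finite. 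Conversely, if $q(K)$ is finite then $K\cap H=\ker(q|_K)$ has finite index in $K$; if $K$ is finite we are done, and otherwise $K\cap H$ is an infinite subgroup of $H\cong\Z$, so $K$ is infinite virtually cyclic with $K\cap H$ infinite, i.e. $K\sim H$. Thus $\F[H]=\{K\leq C\mid q(K)\in\F_{Fin}(Q)\}$.

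For the inequality $\text{\rm gd}_{\F[H]}C\leq \gdp Q$, I would take a minimal model $X$ for $\underline{E}Q$ and inflate it along $q$ to a $C$-CW-complex $q^*X$, replacing each cell of orbit type $Q/L$ by one of orbit type $C/q^{-1}(L)$; this preserves dimension, so $\dim q^*X=\gdp Q$. Since $C$ acts through $Q$, one has $(q^*X)^K=X^{q(K)}$ for every $K\leq C$, which is contractible when $q(K)$ is finite and empty otherwise. By the reduction this is exactly the defining condition for $E_{\F[H]}C$, so $q^*X$ is such a model and $\text{\rm gd}_{\F[H]}C\leq\gdp Q$.

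For the reverse inequality I would start from a minimal model $Y$ for $E_{\F[H]}C$ and pass to the $H$-fixed set $Y^H$. As $H\unlhd C$ and $H$ acts trivially on $Y^H$, the space $Y^H$ is a sub-$C$-CW-complex carrying a $Q$-action, with $\dim Y^H\leq\dim Y$. For $\bar K\leq Q$ put $K:=q^{-1}(\bar K)$; since $K\supseteq H$ one gets $(Y^H)^{\bar K}=Y^{K}$. If $\bar K$ is finite, then $K$ is an extension of $\bar K$ by $H\cong\Z$, hence infinite virtually cyclic with $K\cap H=H$ infinite, so $K\in\F[H]$ and $Y^K$ is contractible; if $\bar K$ is infinite, then $q(K)=\bar K$ is infinite, so $K\notin\F[H]$ and $Y^K$ is empty. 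Hence $Y^H$ is a model for $\underline{E}Q$, giving $\gdp Q\leq\dim Y=\text{\rm gd}_{\F[H]}C$ and, together with the previous step, the desired equality.

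I expect the main obstacle to be the bookkeeping at the level of equivariant CW-structures rather than any deep idea: checking that inflation really produces an honest $C$-CW-complex with isotropy groups $q^{-1}(L)$ and the stated fixed-point behavior, and that the fixed set $Y^H$ is a genuine $Q$-CW-complex with $(Y^H)^{\bar K}=Y^{q^{-1}(\bar K)}$. The two hypotheses are used in sharply different places: $H\cong\Z$ drives the family reduction (so that finite subgroups inject into $Q$ and preimages of finite groups are virtually cyclic), whereas $H\unlhd C$ is what makes $Q$ a group and lets $Q$ act on $Y^H$. For this reason it seems cleanest to isolate the family reduction as a lemma before running the two geometric constructions.
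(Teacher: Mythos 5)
Note first that the paper does not actually prove this statement: it is quoted directly from Degrijse--Petrosyan (\cite{DePe14}, 4.2), so there is no internal proof to compare yours against. Your argument is correct and is, in substance, the standard proof of the cited result: with $q\colon \Comm_G(H)\to \Comm_G(H)/H$, the hypothesis $H\simeq\Z$ gives exactly the identification of $\F[H]$ with the pullback $q^{*}\F_{Fin}$, after which inflation of a minimal model for $\underline{E}\bigl(\Comm_G(H)/H\bigr)$ yields one inequality and passage to $H$-fixed points (a $\Comm_G(H)/H$-CW-complex, by normality of $H$) yields the other, both constructions being dimension-non-increasing.
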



After these preliminaries we are ready for computing the commensurators of the virtually cyclic subgroups of $B_n$. Recall that, as $B_n$ is torsion-free, any non-trivial virtually cyclic group of $B_n$ is infinite cyclic. The following lemma will be very useful, as it identifies the commensurators with normalizers of \emph{subgroups}, in the classic sense:

\begin{lemma}
\label{puregenerator}

Let $H=\langle y \rangle <B_n$ be a cyclic subgroup. Then there exists a braid $x$ such that:

\begin{itemize}

\item $\emph{Comm}_{B_n}(H)=N_{B_n}(\langle x\rangle).$

\item The braids $x$ and $y$ belong to the same cyclic group.

\item The braid $x$ is pure, and no proper root of $x$ is pure.

\end{itemize}
\end{lemma}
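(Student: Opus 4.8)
The plan is to produce the desired pure braid $x$ by combining two operations on the given generator $y$: first replacing $y$ by a suitable power to land inside the pure braid group $P_n$, and then passing to a maximal cyclic subgroup to kill the existence of pure proper roots. I would organize the argument around the invariants already established in Section 3, using Corollary~\ref{C:common_power_cyclic} and Proposition~\ref{P:maximal_cyclic} as the main engines.

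\smallskip

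\noindent\textbf{Step 1: find a pure power.} Since the image of $y$ under the surjection $B_n\to\Sigma_n$ has finite order, there is a positive integer $d$ such that $y^d\in P_n$. If $y^d$ is trivial then $y$ is torsion, contradicting that $B_n$ is torsion-free (so $H$ would be trivial, a case we may exclude); hence $y^d$ is a nontrivial pure braid. Set $w:=y^d$, so $\langle w\rangle\leq H$ with finite index, and in particular $\langle w\rangle\sim H$ in the sense of the equivalence relation of Section~2, i.e.\ they are commensurable.

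\smallskip

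\noindent\textbf{Step 2: pass to a maximal cyclic subgroup.} By Proposition~\ref{P:maximal_cyclic}, the cyclic subgroup $\langle w\rangle\subset P_n$ is contained in a unique maximal cyclic subgroup $\langle x\rangle$ of $P_n$, where $x\in P_n$. By construction $x$ is pure, and maximality gives exactly the third bullet: if $x$ had a proper pure root $x=z^e$ with $z\in P_n$ and $e>1$, then $\langle x\rangle\subsetneq\langle z\rangle$ would be a larger cyclic subgroup of $P_n$, contradicting maximality. This also secures the second bullet, since $x,y$ share the common power $w=y^d\in\langle x\rangle$, so both lie in $\langle x\rangle\cdot H$; more precisely Corollary~\ref{C:common_power_cyclic}, applied to the relation $x^{k}=y^{m}$ coming from the common power $w$, shows $\langle x,y\rangle$ is cyclic, so $x$ and $y$ genuinely belong to one cyclic group.

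\smallskip

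\noindent\textbf{Step 3: identify the commensurator with a normalizer.} It remains to prove $\mathrm{Comm}_{B_n}(H)=N_{B_n}(\langle x\rangle)$. Because $H$, $\langle w\rangle$ and $\langle x\rangle$ are pairwise commensurable infinite cyclic groups, they have the same commensurator, so it suffices to show $\mathrm{Comm}_{B_n}(\langle x\rangle)=N_{B_n}(\langle x\rangle)$. The inclusion $N_{B_n}(\langle x\rangle)\subseteq\mathrm{Comm}_{B_n}(\langle x\rangle)$ is immediate. For the reverse, take $g\in\mathrm{Comm}_{B_n}(\langle x\rangle)$; then $\langle x\rangle\cap g\langle x\rangle g^{-1}$ is infinite, so there are nonzero $a,b$ with $x^{a}=g x^{b}g^{-1}$. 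Now $g x^{b}g^{-1}=(gxg^{-1})^{b}$ is pure, and $x^a$ is pure, so both $x$ and $gxg^{-1}$ are pure braids admitting a common power; the uniqueness of roots in $P_n$ (Proposition~\ref{P:unique_roots}) forces $gxg^{-1}$ to be a \emph{pure} element of the maximal cyclic group generated by $x$. Since $\langle x\rangle$ is the unique maximal cyclic subgroup of $P_n$ containing $x^a$, and $gxg^{-1}$ is a pure braid some power of which equals $x^a$, conjugation by $g$ must carry the maximal cyclic group $\langle x\rangle$ onto the maximal cyclic group containing $gxg^{-1}$, which by uniqueness is again $\langle x\rangle$; hence $g\in N_{B_n}(\langle x\rangle)$.

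\smallskip

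\noindent I expect Step~3 to be the main obstacle, since it is where the ``no proper pure root'' normalization really earns its keep: without knowing $x$ generates a \emph{maximal} cyclic subgroup of $P_n$, a commensurating element $g$ could conjugate $x$ to a proper root or proper power of itself, and one would only obtain containment in the commensurator rather than in the honest normalizer. The delicate point to verify carefully is that $gxg^{-1}$ is forced to lie in $\langle x\rangle$ itself (not merely in a commensurable cyclic group), and here the combination of purity of $gxg^{-1}$, uniqueness of roots in $P_n$, and uniqueness of the maximal cyclic subgroup is exactly what pins it down.
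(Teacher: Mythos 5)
Your construction of $x$ (a pure power of $y$, then a generator of the unique maximal cyclic subgroup of $P_n$ containing it) and your verification of the second and third bullets coincide with the paper's proof. Where you genuinely diverge is in proving $\mathrm{Comm}_{B_n}(H)=N_{B_n}(\langle x\rangle)$. The paper establishes the hard inclusion by a Nielsen--Thurston case analysis: from $\gamma x^p\gamma^{-1}=x^q$ it deduces $p=\pm q$ (centrality of $\Delta^2$ in the periodic case, rigidity of dilatation factors in the pseudo-Anosov case, passage to the external braid in the reducible case), and then finishes with uniqueness of roots and Corollary~\ref{C:pure_centralizers}. You instead reduce to computing $\mathrm{Comm}_{B_n}(\langle x\rangle)$ --- legitimate, since by Sections~2 and~4 of the paper $\mathrm{Comm}_G(K)=N_G[K]$ depends only on the commensurability class $[K]$ --- and then argue algebraically: conjugation by $g$ is an automorphism of $B_n$ preserving the normal subgroup $P_n$, hence it carries $\langle x\rangle$ to a maximal cyclic subgroup of $P_n$; since $g\langle x\rangle g^{-1}$ and $\langle x\rangle$ both contain $x^a=(gxg^{-1})^b$, the uniqueness in Proposition~\ref{P:maximal_cyclic} forces $g\langle x\rangle g^{-1}=\langle x\rangle$. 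This is correct, and it is exactly the right way to dispose of the Baumslag--Solitar-type worry you raise: you obtain genuine equality of subgroups, not merely $gxg^{-1}\in\langle x\rangle$. In terms of trade-offs, your route concentrates all the Nielsen--Thurston input into Proposition~\ref{P:maximal_cyclic} (already proved) and makes the commensurator identification soft algebra, while the paper's longer re-derivation of the exponent rigidity $p=\pm q$ shows concretely how commensurating elements act, a computation echoed later in Corollary~\ref{C:comm_pA} and in the reducible case of the main theorem.

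One small repair is needed in the middle of your Step~3: you assert that $gxg^{-1}$ is pure because its $b$-th power $x^a$ is pure, but that inference is invalid (every braid has a pure power), and the accompanying appeal to Proposition~\ref{P:unique_roots} alone does not yield $gxg^{-1}\in\langle x\rangle$, since the exponents $a$ and $b$ differ. Both points are fixed at once by the normality of $P_n$ in $B_n$ (it is the kernel of $B_n\to\Sigma_n$), which is also what licenses your final claim that conjugation carries maximal cyclic subgroups of $P_n$ to maximal cyclic subgroups of $P_n$; since that closing argument is correct and self-contained, this is an imprecision rather than a gap.
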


\begin{proof}
This result is similar to Proposition 4.8 in \cite{JuTr16}, but we provide a proof adapted to braid groups.

Let $m$ be the smallest positive integer such that $z=y^m$ is a pure braid, and let $x$ be a generator of the maximal cyclic subgroup in $P_n$ containing $\langle z\rangle$ (see Proposition~\ref{P:maximal_cyclic}). We will see that $x$ satisfies the properties in the statement. Clearly, it satisfies the third one by construction.

Notice that $x^k = z = y^m$ for some nonzero integers $k$ and $m$. Hence by Corollary~\ref{C:common_power_cyclic} $\langle x,y\rangle$ is cyclic, so the second property also holds.

Let us finally show that $\Comm_{B_n}(\langle y\rangle)=N_{B_n}(\langle x\rangle)$. Let $\gamma \in \Comm_{B_n}(\langle y\rangle)$. This means that $|\langle \gamma y \gamma^{-1} \rangle \cap \langle y\rangle |=\infty$. As every nontrivial braid has a pure nontrivial power, this intersection must contain an infinite number of pure braids, so $|\langle \gamma x \gamma^{-1} \rangle \cap \langle x\rangle |=\infty$, hence $\gamma x^{p} \gamma^{-1} = x^q$ for some nonzero integers $p$ and $q$. We will now show that $p=\pm q$.

If $x$ is periodic (and pure), $x^p$ is a nontrivial power of $\Delta^2$, which is central. Hence $\gamma x^{p} \gamma^{-1} = x^p$, so $x^p=x^q$ and, as $B_n$ is torsion-free, $p=q$ in this case. If $x$ is pseudo-Anosov and its dilatation factor is $\lambda$, the dilatation factor of $x^p$ is $\lambda^{|p|}$, and the dilatation factor of $x^q$ is $\lambda^{|q|}$. As dilatation factors are preserved by conjugations, it follows that $|p|=|q|$, as we wanted to show. (It is worth mentioning that a conjugation may transform $x$ into its inverse. For instance, in $B_3$ we have $\Delta (\sigma_1\sigma_2^{-1})^{3} \Delta^{-1} = (\sigma_1\sigma_2^{-1})^{-3}$.) Now suppose that $x$ is reducible. Then $x_{ext}$ is either periodic or pseudo-Anosov, and $\gamma$ sends $CRS(x)$ to itself. Hence $p_x(\gamma) (x_{ext})^p  p_x(\gamma)^{-1} = (x_{ext})^q$. Applying the claim to $x_{ext}$, it follows that $p=\pm q$, as desired.

If $p=q$ then $\gamma x^q \gamma^{-1} = x^q$, so $\gamma\in Z_{B_n}(x^q)=Z_{B_n}(x)$ and then $\gamma \in Z_{B_n}(\langle x\rangle) \subset N_{B_n}(\langle x \rangle)$.  If $p=-q$ then $\gamma x^{-q} \gamma^{-1} = x^q$, so $\gamma x^{-1} \gamma^{-1}$ and $x$ are two pure $q$-th roots of the same pure braid, hence they coincide, and then $\gamma \in N_{B_n}(\langle x\rangle)$.

Conversely, suppose that $\gamma \in N_{B_n}(\langle x\rangle)$. This means that either $\gamma x \gamma^{-1} = x$ or $\gamma x \gamma^{-1} = x^{-1}$. Recall that $y^k=z=x^m$ for some nonzero integers $k$ and $m$. Then $ \gamma y^k \gamma^{-1} = \gamma x^m \gamma^{-1} = x^{\pm m} = y^{\pm k} \in \langle \gamma y \gamma^{-1}\rangle \cap \langle y \rangle = H^\gamma \cap H$, so $|H^\gamma \cap H | = \infty$. This shows that $\gamma \in \Comm_{B_n}(H)$, finishing the proof.
\end{proof}

We shall use the following results later. They can be deduced from the previous lemma, but the first one can also be showed independently.

\begin{lemma}\label{L:comm_periodic}
If $y\in B_n$ is periodic and nontrivial, then $\emph{Comm}_{B_n}(\langle y\rangle)=B_n$.
\end{lemma}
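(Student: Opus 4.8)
The plan is to give the direct argument suggested by the simplified description of the commensurator, exploiting the fact that a periodic braid has a central power. First I would recall that, by the definition of periodic, $y$ being periodic means that some nonzero power $y^k$ lies in the centre $\langle \Delta^2\rangle$ of $B_n$; say $y^k=\Delta^{2\ell}$. Since $y\neq 1$ and $B_n$ is torsion-free, this power cannot be trivial (otherwise $y^k=1$ would force $y=1$), so in fact $\ell\neq 0$ and $y^k=\Delta^{2\ell}$ is a \emph{nontrivial central} element of $B_n$.

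Next I would fix an arbitrary $\gamma\in B_n$ and set $H=\langle y\rangle$. Because $y^k$ is central, conjugation by $\gamma$ fixes it: $(\gamma y\gamma^{-1})^k=\gamma y^k\gamma^{-1}=y^k$. Hence the nontrivial element $y^k$ belongs simultaneously to $H=\langle y\rangle$ and to $H^\gamma=\langle \gamma y\gamma^{-1}\rangle$, so $\langle y^k\rangle$ is an infinite cyclic subgroup of $H\cap H^\gamma$. Thus $|H\cap H^\gamma|=\infty$, which, by the description of the commensurator for virtually cyclic subgroups recorded just after Definition~\ref{Commdefo}, means exactly that $\gamma\in\Comm_{B_n}(H)$. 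As $\gamma$ was arbitrary, $\Comm_{B_n}(\langle y\rangle)=B_n$.

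Alternatively, the statement can be deduced from Lemma~\ref{puregenerator}: the associated pure braid $x$ lies in the same cyclic group as $y$, hence is itself periodic and pure, so $x\in\langle\Delta^2\rangle$ (the set of periodic pure braids); since $x$ admits no proper pure root it must equal $\Delta^{\pm 2}$, whence $\langle x\rangle=\langle\Delta^2\rangle$ is central and $N_{B_n}(\langle x\rangle)=B_n=\Comm_{B_n}(H)$. I expect essentially no serious obstacle in either route: the only points that genuinely require care are the appeal to torsion-freeness to guarantee that the central power $y^k$ is nontrivial, and the use of the equivalence between the two forms of the commensurator that holds for virtually cyclic $H$.
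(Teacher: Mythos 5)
Your proof is correct and follows essentially the same route as the paper: both arguments take a nontrivial central power $y^m=\Delta^{2k}$ of the periodic braid $y$, observe that it is fixed under conjugation by any $\alpha\in B_n$, and conclude that $\langle y\rangle\cap\langle\alpha y\alpha^{-1}\rangle$ is infinite, hence $\alpha\in\Comm_{B_n}(\langle y\rangle)$. Your added appeal to torsion-freeness to ensure the central power is nontrivial, and your alternative derivation from Lemma~\ref{puregenerator}, are both consistent with the paper (which remarks that the lemma can be deduced from Lemma~\ref{puregenerator} or shown independently).
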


\begin{proof}
If $y$ is periodic, $y^m$ is central for some nonzero integer $m$, that is, $y^m=\Delta^{2k}$ for some nonzero integer $k$. Then, for every $\alpha\in B_n$, the conjugate $\alpha y \alpha^{-1}$ is such that $(\alpha y \alpha^{-1})^m = \alpha \Delta^{2k} \alpha^{-1} = \Delta^{2k}$. Hence $\langle y \rangle \cap \langle \alpha y \alpha^{-1} \rangle \supset \langle \Delta^{2k}\rangle$, so $\alpha \in \Comm_{B_n}(\langle y\rangle)$.
\end{proof}

\begin{corollary}\label{C:comm_pA}
If $y\in B_n$ is pseudo-Anosov, then $\emph{Comm}_{B_n}(\langle y\rangle)$ is isomorphic to either $\mathbb Z^2$ or an extension of $\mathbb Z^2$ by $\mathbb Z/2$.
\end{corollary}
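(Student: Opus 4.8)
The plan is to reduce the computation of $\mathrm{Comm}_{B_n}(\langle y\rangle)$ to that of the normalizer of a cyclic group generated by a \emph{pure} pseudo-Anosov braid, and then to exploit the fact that $\mathrm{Aut}(\mathbb Z)\cong \mathbb Z/2$.

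First I would apply Lemma~\ref{puregenerator} to $H=\langle y\rangle$ to produce a pure braid $x$ with $\mathrm{Comm}_{B_n}(\langle y\rangle)=N_{B_n}(\langle x\rangle)$ and with $x$ and $y$ lying in a common cyclic subgroup. Since $y$ is pseudo-Anosov, so is $x$: a common cyclic overgroup forces both to be powers of a single braid $w$, and $w$ can be neither periodic nor reducible, as those Nielsen-Thurston types are inherited by all nontrivial powers. Hence $w$, and therefore $x$, is pseudo-Anosov, and Theorem~\ref{T:centralizer_Bn} yields $Z_{B_n}(x)\cong \mathbb Z^2$.

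Next I would analyze $N_{B_n}(\langle x\rangle)$ directly. Because $\langle x\rangle\cong\mathbb Z$ and $B_n$ is torsion-free, every $\gamma\in N_{B_n}(\langle x\rangle)$ satisfies $\gamma x\gamma^{-1}=x^{\pm 1}$, so conjugation defines a homomorphism $N_{B_n}(\langle x\rangle)\to \mathrm{Aut}(\langle x\rangle)\cong \mathbb Z/2$ whose kernel is exactly $Z_{B_n}(x)\cong \mathbb Z^2$. Thus $N_{B_n}(\langle x\rangle)/Z_{B_n}(x)$ embeds into $\mathbb Z/2$: either it is trivial, in which case $\mathrm{Comm}_{B_n}(\langle y\rangle)=N_{B_n}(\langle x\rangle)\cong\mathbb Z^2$, or it is all of $\mathbb Z/2$, in which case one obtains a short exact sequence $1\to\mathbb Z^2\to N_{B_n}(\langle x\rangle)\to\mathbb Z/2\to 1$, that is, an extension of $\mathbb Z^2$ by $\mathbb Z/2$, as claimed.

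The argument is essentially clean bookkeeping once Lemma~\ref{puregenerator} is in hand, so I do not anticipate a serious obstacle; the only point that requires care is the transfer of the pseudo-Anosov type between $x$ and $y$ through their common cyclic overgroup. It is worth noting that both outcomes genuinely occur: the example $\Delta(\sigma_1\sigma_2^{-1})^{3}\Delta^{-1}=(\sigma_1\sigma_2^{-1})^{-3}$ in $B_3$, recorded in the proof of Lemma~\ref{puregenerator}, exhibits an element that inverts a pure pseudo-Anosov power, realizing the nontrivial extension.
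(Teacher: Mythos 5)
Your proof is correct and takes essentially the same route as the paper: Lemma~\ref{puregenerator} reduces the commensurator to $N_{B_n}(\langle x\rangle)$ for a pure pseudo-Anosov $x$ (type being preserved by roots and powers), Theorem~\ref{T:centralizer_Bn} gives $Z_{B_n}(x)\simeq\mathbb Z^2$, and the conjugation action on $\langle x\rangle\cong\mathbb Z$ yields the dichotomy $N=Z\simeq\mathbb Z^2$ or $1\to\mathbb Z^2\to N\to\mathbb Z/2\to 1$, which is exactly the paper's map $p$. Your closing remark that the $B_3$ example $\Delta(\sigma_1\sigma_2^{-1})^{3}\Delta^{-1}=(\sigma_1\sigma_2^{-1})^{-3}$ realizes the nontrivial extension is a correct bonus not present in the paper's proof of this corollary.
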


\begin{proof}
By Lemma~\ref{puregenerator}, there is some $x\in P_n$ such that $\Comm_{B_n}(\langle y\rangle)=N_{B_n}(\langle x \rangle)$ As $x$ is a root of a power of $y$, it is also pseudo-Anosov, hence $Z_{B_n}(\langle x \rangle)\simeq \mathbb Z^2$. Now notice that for any $\alpha\in N_{B_n}(\langle x \rangle)$, either $\alpha^{-1} x \alpha =x$ or $\alpha^{-1} x \alpha = x^{-1}$. If all elements are of the first kind, then $N_{B_n}(\langle x \rangle)= Z_{B_n}(\langle x \rangle)\simeq \mathbb Z^2$. Otherwise, we have the following short exact sequence:
$$
    1\longrightarrow Z_{B_n}(\langle x \rangle) \longrightarrow N_{B_n}(\langle x \rangle) \stackrel{p}{\longrightarrow} \mathbb Z/2 \longrightarrow 1
$$
where $p(\alpha)=0$ if $\alpha^{-1} x \alpha =x$, and $p(\alpha)=1$ if $\alpha^{-1} x \alpha = x^{-1}$. As $Z_{B_n}(\langle x \rangle)\simeq \mathbb Z^2$ the result holds.
\end{proof}

\section{Classifying space for the family of virtually cyclic groups}

In this section we will compute $\gd B_n$, which is the major achievement of this paper. Our result is:

\begin{theorem}
\label{teoremazo}

The geometric dimension of $B_n$ with respect to the family of virtually cyclic subgroups is $n$, for $n\geq 3$.

\end{theorem}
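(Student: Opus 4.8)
The lower bounds $\gd B_n\ge n$ and $\gd P_n\ge n$ are already in Corollary \ref{C:lower_bound}, so the entire content is the upper bound $\gd B_n\le n$. Note first that the statement for $P_n$ is then free: since $P_n<B_n$ and $\F_{vc}(B_n)\cap P_n=\F_{vc}(P_n)$, the monotonicity of Remark \ref{subfamilies} gives $\gd P_n\le\gd B_n=n$, which with the lower bound yields $\gd P_n=n$. To bound $\gd B_n$ I would apply the L\"uck--Weiermann estimate of Corollary \ref{C:cotas_suficientes} with $k=n$ and verify its three hypotheses. The first, $\gdp B_n=n-1\le n$, is Harer's computation recalled above. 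The second, $\gdp N_{B_n}[H]\le n-1$ for every class $[H]$, is automatic: $N_{B_n}[H]=\Comm_{B_n}(H)$ is a \emph{subgroup} of $B_n$, so monotonicity gives $\gdp N_{B_n}[H]\le\gdp B_n=n-1$. Hence everything reduces to the third hypothesis, $\text{gd}_{\F[H]}N_{B_n}[H]\le n$.

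For the third hypothesis I would, for each orbit of equivalence classes, choose the representative $H_0=\langle x\rangle$ supplied by Lemma \ref{puregenerator}, so that $N_{B_n}[H]=N_{B_n}(\langle x\rangle)$ with $x$ pure. Then $H_0\simeq\mathbb Z$ is normal in $N_{B_n}[H]$, and Degrijse--Petrosyan's Theorem \ref{T:gdFH=gdpCommH/K} identifies $\text{gd}_{\F[H]}N_{B_n}[H]$ with $\gdp\bigl(N_{B_n}(\langle x\rangle)/\langle x\rangle\bigr)$. It then suffices to bound this proper geometric dimension of the quotient by $n$, which I would do according to the Nielsen--Thurston type of $x$ (equivalently of $y$, with which $x$ shares a power, so the two have the same type).

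The periodic and pseudo-Anosov types are straightforward. If $x$ is periodic then $x$ is a power of $\Delta^2$, $N_{B_n}(\langle x\rangle)=B_n$ by Lemma \ref{L:comm_periodic}, and $B_n/\langle\Delta^2\rangle$ sits as a finite-index subgroup of the mapping class group of the $(n+1)$-punctured sphere; monotonicity of $\gdp$ together with the equality $\gdp=\text{vcd}$ for sphere mapping class groups gives $\gdp(B_n/\langle\Delta^2\rangle)\le\text{vcd}\,\Gamma_{0,n+1}=n-2\le n$. If $x$ is pseudo-Anosov then, by Corollary \ref{C:comm_pA}, $N_{B_n}(\langle x\rangle)$ is virtually $\mathbb Z^2$, so its quotient by the infinite cyclic $\langle x\rangle$ is virtually $\mathbb Z$, whence $\gdp\le 1\le n$.

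\textbf{The reducible non-periodic type is the genuine obstacle.} Here I expect $N_{B_n}(\langle x\rangle)/\langle x\rangle$ to carry torsion (coming from the periodic pieces in the centralizer decomposition), so the elementary arguments above no longer apply. The plan is threefold. First, using the centralizer descriptions of Theorem \ref{T:centralizer_Bn} and Proposition \ref{P:Centralizer_in_Pn}, to control the virtual cohomological dimension: as a subgroup of $B_n$ one has $\text{vcd}\,N_{B_n}(\langle x\rangle)\le n-1$, and passing to the index-at-most-two centralizer $Z_{B_n}(x)$, in which $\langle x\rangle$ is central, then quotienting by this $\mathbb Z$ lowers the vcd, so that $\text{vcd}\bigl(N_{B_n}(\langle x\rangle)/\langle x\rangle\bigr)\le n-1$. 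Second, to prove that this quotient is \emph{virtually torsion-free}, exploiting the decomposition of $x$ into internal and external braids. Third, to invoke Mart\'inez's Theorem 2.5 in \cite{Mar13}, which applies precisely to such virtually torsion-free quotients, in order to upgrade the vcd bound into the desired $\gdp\le n$. Establishing the virtual torsion-freeness of the quotient is the step I anticipate to be hardest, and it is exactly what forces this case to be handled separately from the other two.
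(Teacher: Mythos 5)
Your overall architecture is exactly the paper's: lower bound from Corollary \ref{C:lower_bound}, upper bound via Corollary \ref{C:cotas_suficientes} with $k=n$, reduction of the third hypothesis through Lemma \ref{puregenerator} and Theorem \ref{T:gdFH=gdpCommH/K}, the Nielsen--Thurston trichotomy, and Mart\'inez's theorem for the reducible case. The periodic and pseudo-Anosov cases are complete and match the paper. However, your plan for the reducible non-periodic case has two genuine gaps.

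First, and most seriously, you never address the rank term in Mart\'inez's inequality. Theorem \ref{Conchita} bounds $\underline{\textrm{cd}}\,\Gamma$ by $\max_{F}\{ \text{pd}_{\mathbb{Z}W_\Gamma F}B(W_\Gamma F)+\text{rk}(W_{\Gamma}F)\}$, where $\text{rk}(W_\Gamma F)$ is the rank of a maximal elementary abelian subgroup of the Weyl group $W_\Gamma F = N_\Gamma F/F$. A vcd bound controls only the first summand; without controlling the second, the inequality gives $\underline{\textrm{cd}}\,\Gamma \leq \text{vcd}(\Gamma) + \text{rk}$, which a priori can exceed $n$ by an arbitrary amount, since $\Gamma = N_{B_n}(H)/H$ does carry torsion and nothing obvious prevents its Weyl groups from containing large elementary abelian $p$-groups. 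The paper spends two lemmas on precisely this point: every finite subgroup of $\Gamma$ is cyclic (because its preimage in the torsion-free group $B_n$ is virtually cyclic, hence cyclic), and the same argument passed through $N_\Gamma F \to W_\Gamma F$ shows every finite subgroup of each Weyl group is cyclic, so $\text{rk}(W_\Gamma F)=1$. This step is indispensable and absent from your plan.

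Second, your vcd bound rests on the assertion that quotienting $Z_{B_n}(x)$ by the central $\mathbb{Z}=\langle x\rangle$ ``lowers the vcd.'' This is not automatic: the quotient of a torsion-free group of finite cohomological dimension by a central $\mathbb{Z}$ can even have torsion (so vcd statements need the virtual torsion-freeness \emph{first}, inverting the order of your steps), and even for torsion-free quotients the drop in dimension requires an argument --- either the Gysin sequence (which the paper notes as an alternative in a remark, and which itself needs finite cohomological dimension of the quotient as an input), or the paper's explicit route: bi-orderability of $P_n$ gives $N_{P_n}(H)=Z_{P_n}(H)$, Proposition \ref{P:Centralizer_in_Pn} gives $Z_{P_n}(H)\simeq P_{t_1}\times\cdots\times P_{t_r}\times\mathbb{Z}^t$ with $t_1+\cdots+t_r+t\le n+r-1$, and then a central extension by $C/H\simeq \mathbb{Z}^{r+t-1}$ of a product of torsion-free sphere mapping class subgroups, together with subadditivity of $\text{gd}$ for extensions of torsion-free groups, yields $\text{vcd}(\Gamma)\le n-2$. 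Your sketch hides exactly this work; also note that the maximality of $\langle x\rangle$ as a cyclic subgroup of $P_n$ (from Lemma \ref{puregenerator}) is what makes $C/H$ torsion-free, another point your outline does not engage. Incidentally, a bound of $n-1$ instead of $n-2$ would still suffice for the theorem \emph{provided} the rank term is shown to equal $1$, which returns to the first gap.
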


We devote this whole section to prove this result, and assume from now on that $n\geq 3$. According to Corollary \ref{C:lower_bound}, $\gd B_n\geq n$. Then, we must prove that $n$ is also an upper bound for this geometric dimension. As said above, we base our arguments on  the main theorem \ref{maintheorem} of L\"{u}ck-Weiermann, and we need to bound the three geometric dimensions that appear Corollary \ref{C:cotas_suficientes}. The first two bounds are quite straightforward, and we will deal with them later. So let us concentrate in  $\textrm{gd}_{\F[H]}N_{B_n}[H]$, which has to be upper bounded by $n$.

Recall from Lemma~\ref{puregenerator} that we can assume that $H$ is generated by a pure braid $x$ which admits no proper roots, and that $N_{B_n}[H]=\Comm_{B_n}(H)=N_{B_n}(H)$.  But every subgroup is normal in its normalizer, hence by Theorem~\ref{T:gdFH=gdpCommH/K} we have $\text{\rm gd}_{{\cal{F}}[H]}(N_{B_n}[H])=\gdp(N_{B_n}[H] /H)$. Hence we need to upper bound $\gdp(N_{B_n}[H] /H)$ by $n$.

We invoke again Nielsen-Thurston theory to divide the computation in three cases. Notice that if the original generator of $y$ was periodic (respectively pseudo-Anosov or reducible), then so is the pure element $x$ given by Lemma \ref{puregenerator}, as it is a root of a power of $y$.

\textbf{1. Periodic case}. Suppose that $H=\langle x\rangle$ where $x$ is periodic. According to Lemma~\ref{L:comm_periodic}, $\Comm_{B_n}(H)=B_n$, and as $x$ is pure, periodic and has no proper roots, $x=\Delta^{\pm 2}$. Therefore $ \gdp(N_{B_n}[H] /H)=\gdp (B_n/Z(B_n))$, being $Z(B_n)=\langle \Delta^2\rangle$ the center of $B_n$. It is known that $B_n/\langle \Delta^2\rangle$ is a subgroup of $\Gamma_{0,n+1}$, the mapping class group of the sphere with $n+1$ punctures, and it is a consequence of work of \cite{HOP16} and \cite{Har86} that the proper geometric dimension of $\Gamma_{0,n+1}$ is $n-2$. Hence, $\gdp(N_{B_n}[H] /H)\leq n-2$ in this case.

\textbf{2. Pseudo-Anosov case}. Suppose now that $x$ is pseudo-Anosov. By Corollary~\ref{C:comm_pA}, we know that $\Comm_{B_n}(H)$ is either $\mathbb Z^2$ or an index 2 extension of $\mathbb Z^2$. Without loss of generality, we may assume that $x$ is a power of one of the two generators of $\mathbb Z^2$. Hence, $\Comm_{B_n}(H)/H$ is isomorphic to a finite extension of $\mathbb{Z}\oplus\mathbb{Z}/m$ ($m=1$ is allowed). Applying Theorem 5.26 of \cite{Luc05}, we obtain that $\underline{\textrm{gd}}\textrm{ }\Comm_{B_n}(H)/H=1$, the Hirsch number, and hence $\gdp(N_{B_n}[H] /H)= 1$ in this case.


\textbf{3. Reducible, non-periodic case.} Suppose finally that $x$ is reducible and not periodic. We will find an upper bound $\gdp (N_{B_n}(H)/H)$ basing our strategy in the following result of C. Mart\'{i}nez (Theorem 2.5 in \cite{Mar13}):

\begin{theorem}
\label{Conchita}
For any discrete group $\Gamma$, the following inequality holds:

$$\underline{\textrm{\rm cd}}\textrm{ }\Gamma\leq \max_{F\in \mathcal F_{\text{\rm Fin}}(\Gamma)}\{ \text{\rm pd}_{\mathbb{Z}W_\Gamma F}B(W_\Gamma F)+\text{\rm rk}(W_{\Gamma}F)\}.$$

\end{theorem}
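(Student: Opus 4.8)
The statement is Bredon-cohomological in nature, so my plan is to work inside the category of Bredon modules over the orbit category $\mathcal{O}_{\mathcal{F}_{\text{Fin}}}\Gamma$, where $\underline{\text{cd}}\ \Gamma$ is by definition the projective dimension of the constant functor $\underline{\mathbb{Z}}$. The first step is to replace the global computation by a \emph{local} one over the finite subgroups. For a fixed finite subgroup $F$, the value of a Bredon module at the object $\Gamma/F$ carries an action of the Weyl group $W_\Gamma F = N_\Gamma(F)/F$, and the standard adjunction between restriction to $F$ and (co)induction identifies the relevant $\text{Ext}$-groups over $\mathcal{O}_{\mathcal{F}_{\text{Fin}}}\Gamma$ of the atomic ``free'' module concentrated at the conjugacy class of $F$ with ordinary $\text{Ext}$-groups over the integral group ring $\mathbb{Z}W_\Gamma F$. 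This is the mechanism that converts a question about Bredon modules into one about genuine $\mathbb{Z}W_\Gamma F$-modules.

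The second step is to stratify $\underline{\mathbb{Z}}$ by the poset of conjugacy classes of finite subgroups, ordered by inclusion. Filtering the orbit category by the order of $F$ produces a tower of subcategories and a corresponding filtration of $\underline{\mathbb{Z}}$ whose graded pieces are supported on single conjugacy classes. The coefficient module governing the stratum at $F$ is precisely the module $B(W_\Gamma F)$: it measures the value of the constant functor at $F$ modulo the contributions already accounted for by the finite subgroups strictly containing $F$, and it is naturally a $\mathbb{Z}W_\Gamma F$-module (concretely, the reduced homology of the nerve of the poset of proper finite overgroups of $F$, equipped with its $W_\Gamma F$-action). Combining this filtration with the adjunction of the first step yields long exact sequences --- equivalently an isotropy-type spectral sequence --- whose input is $\text{Ext}^{*}_{\mathbb{Z}W_\Gamma F}(B(W_\Gamma F), -)$.

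The final step is to read off the bound. Each stratum contributes to Bredon cohomology in degrees bounded by $\text{pd}_{\mathbb{Z}W_\Gamma F}B(W_\Gamma F)$ shifted by the homological spread of $W_\Gamma F$ itself; in the virtually torsion-free situation this spread is controlled by a finite-dimensional model for the classifying space of the torsion-free part of $W_\Gamma F$, whose dimension is the rank $\text{rk}(W_\Gamma F)$. Taking the supremum over all conjugacy classes of finite subgroups, and using that the spectral sequence can be nonzero in total degree at most the sum of the two contributions, gives the asserted inequality.

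I expect the genuinely delicate point to be the bookkeeping that produces the clean additive shift $+\,\text{rk}(W_\Gamma F)$: one must verify that passing from $\text{Ext}$ over $\mathbb{Z}W_\Gamma F$ to Bredon $\text{Ext}$ raises the cohomological degree by exactly the rank and no more, i.e.\ that the filtration by finite subgroups and the filtration coming from the free-abelian direction of the Weyl group interact so that the spectral sequence does not inflate the dimension beyond the additive estimate. Pinning down the correct module $B(W_\Gamma F)$ so that this additivity holds on the nose, rather than obtaining a looser bound, is the crux of the argument.
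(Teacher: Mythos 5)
First, a point of order: the paper does not prove this statement at all --- it is quoted verbatim as Theorem 2.5 of Mart\'{\i}nez-P\'erez \cite{Mar13}, so there is no internal proof to compare yours against; you are attempting to reprove an external theorem. Judged on its own terms, your sketch has a fatal misidentification at its core. The module $B(W_\Gamma F)$ in the statement is \emph{not} ``the reduced homology of the nerve of the poset of proper finite overgroups of $F$''; it is the module of bounded integer-valued functions on $W_\Gamma F$, in the sense of Kropholler--Mislin \cite{KrMi98}, as the paper itself states immediately after the theorem. This is not cosmetic: the whole way the theorem is applied depends on it, since for a virtually torsion-free group $W$ one has $\text{pd}_{\mathbb{Z}W}B(W)=\text{vcd}(W)$ (Lemma 3.9 of \cite{Mar07}), which is exactly how the term $\text{pd}_{\mathbb{Z}W_\Gamma F}B(W_\Gamma F)$ gets bounded by $n-2$ in Lemma \ref{vcd}. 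Under your identification this term would instead record poset homology of finite overgroups, and the paper's subsequent argument would not parse.

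Relatedly, you have the two summands playing roles opposite to their actual ones. You claim $\text{rk}(W_\Gamma F)$ measures the dimension of a classifying space for ``the torsion-free part'' of $W_\Gamma F$; in fact it is the rank of a maximal \emph{elementary abelian} (hence finite) subgroup of $W_\Gamma F$. It accounts for the torsion direction --- the depth of the structure of finite subgroups --- and the point of \cite{Mar13} is precisely that this local quantity, controlled there by Euler classes of representations of finite subgroups, can replace cruder global bounds such as the length of a chain of finite subgroups. Meanwhile it is the Kropholler--Mislin term $\text{pd}_{\mathbb{Z}W}B(W)$ that carries the torsion-free (vcd) size of each Weyl group. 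This swap is visible in how the paper uses the theorem: it proves $\Gamma$ is virtually torsion-free so that the pd term equals $\text{vcd}(\Gamma)\leq n-2$, and proves every finite subgroup of every $W_\Gamma F$ is cyclic so that $\text{rk}(W_\Gamma F)=1$ (Lemma \ref{Weylrank}); under your reading those two lemmas would be aimed at the wrong terms. Your general stratification/isotropy-spectral-sequence framework is a legitimate starting point for results of this type, but the step you yourself flag as the crux --- that the two filtrations interact to give a clean additive bound --- is exactly the hard content of \cite{Mar13}, is obtained there by Euler class arguments rather than formal bookkeeping, and cannot be carried out with the objects as you have defined them.
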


Here pd is the projective dimension over the corresponding group ring, $W_{\Gamma}F=N_{\Gamma}F/F$ is the Weyl group, and $B(W_\Gamma F)$ is the module of bounded functions over $W_\Gamma F$ (see the mentioned paper for details about these concepts). Moreover, $\text{rk}(W_{\Gamma}F)$ is the rank of a maximal elementary abelian subgroup of $W_\Gamma F$, as usual. Recall from Proposition \ref{Eilenberg} that for any group $G$, $\underline{\textrm{cd}}\textrm{ }G=\gdp G$ except possibly in the case $\gdp G=3$. From now on we assume $\Gamma=N_{B_n}(H)/H$, and we should bound the previous sum.

We first deal with the term $\text{\rm pd}_{\mathbb{Z}W_\Gamma F}B(W_\Gamma F)$. By Lemma 7.3 in \cite{KrMi98}, $\text{\rm pd}_{\mathbb{Z}W_\Gamma F}B(\mathbb Z W_\Gamma F)\leq \text{\rm pd}_{\mathbb{Z}\Gamma}B(\mathbb Z \Gamma)$ for every finite $F<\Gamma$. Moreover, if we can prove that $\Gamma$ is virtually torsion-free, Lemma 3.9 in \cite{Mar07} implies that the latter coincides with $\text{\rm vcd}(\Gamma)$, the virtual cohomological dimension of $\Gamma$, and we would have:
$$
   \underline{\text{\rm cd}}\Gamma \leq \max_{F\in \mathcal F_{\text{\rm Fin}}(\Gamma)}\{\text{\rm vcd}(\Gamma)+\text{\rm rk}(W_{\Gamma}F)\}.
$$
Let us then show that this is true:

\begin{lemma}
The group $\Gamma$ is virtually torsion-free.
\end{lemma}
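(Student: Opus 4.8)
We must show that $\Gamma = N_{B_n}(H)/H$ is virtually torsion-free, where $H = \langle x \rangle$ with $x$ a pure, reducible, non-periodic braid admitting no proper roots. My plan is to unwind the structure of $N_{B_n}(H)$ using the fact that normalizing the cyclic group $\langle x \rangle$ is tightly controlled by the centralizer of $x$. The key observation is the short exact sequence
$$
1 \longrightarrow Z_{B_n}(x) \longrightarrow N_{B_n}(\langle x\rangle) \stackrel{p}{\longrightarrow} \{\pm 1\} \longrightarrow 1,
$$
where $p(\gamma) = +1$ if $\gamma x \gamma^{-1} = x$ and $p(\gamma) = -1$ if $\gamma x \gamma^{-1} = x^{-1}$ (the only two possibilities, since $x$ has no proper roots, exactly as in the proof of Lemma~\ref{puregenerator}). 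Since being virtually torsion-free is stable under finite-index overgroups, it suffices to prove that $Z_{B_n}(x)/(Z_{B_n}(x)\cap H)$ is virtually torsion-free, as this is a finite-index subgroup of $\Gamma$.

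\textbf{Reducing to the centralizer quotient.} First I would note that $\langle x \rangle \subseteq Z_{B_n}(x)$, so $Z_{B_n}(x)/\langle x\rangle$ is a well-defined subgroup of $\Gamma$ of index at most $2$. Thus the whole problem reduces to showing that $Z_{B_n}(x)/\langle x \rangle$ is virtually torsion-free. Now I would invoke Theorem~\ref{T:centralizer_Bn} for the reducible case: there is a split short exact sequence
$$
1 \longrightarrow Z(\alpha_1)\times \cdots \times Z(\alpha_k) \longrightarrow Z_{B_n}(x) \stackrel{p_x}{\longrightarrow} Z_0(x_{ext}) \longrightarrow 1,
$$
where $x_{ext}$ is the external braid (either periodic or pseudo-Anosov, and pure) and the $\alpha_i$ are the internal braids. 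The plan is to understand how the generator $x$ sits inside this extension and then to locate where torsion can possibly arise in the quotient by $\langle x \rangle$.

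\textbf{Locating the torsion.} The heart of the matter is this: $\mathbb{Z}^m$-type factors (the abelian centralizers coming from pseudo-Anosov pieces) and the pure-braid-group factors $P_{t_i}$ are themselves torsion-free, being subgroups of torsion-free groups. Torsion in the quotient can only be manufactured when we collapse $\langle x \rangle$ — that is, it comes from finite-order elements of the quotient whose preimages lie in a rank-one part aligned with $x$. Here I would use Proposition~\ref{P:Centralizer_in_Pn} to write $Z_{P_n}(x) \cong P_{t_1}\times\cdots\times P_{t_r}\times \mathbb{Z}^t$, and exploit that $x$ generates a \emph{maximal} cyclic subgroup (no proper roots) to argue that $\langle x\rangle$ is a direct factor, or at least a primitive cyclic subgroup, inside the relevant abelian/central part. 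A central, primitive $\mathbb{Z}$ inside a torsion-free group yields a torsion-free quotient; more generally, after passing to the finite-index pure part $Z_{P_n}(x)$, the quotient $Z_{P_n}(x)/\langle x\rangle$ should be seen to be a product of torsion-free groups with at worst a torsion-free abelian quotient $\mathbb{Z}^{t-1}$ in the rank-one direction killed by $x$.

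\textbf{The main obstacle.} The delicate point — and where I expect the real work to lie — is controlling the splitting in Theorem~\ref{T:centralizer_Bn} together with the normalizer $\{\pm 1\}$-action, to guarantee that dividing by $\langle x \rangle$ does not create torsion via the semidirect/split structure. Concretely, one must verify that $\langle x\rangle$ maps to (or is compatible with) the obvious torsion-free factors and that the conjugation $x \mapsto x^{-1}$ does not conspire with the $Z_0(x_{ext})$-action to produce a finite-order coset. I would handle this by passing to the pure subgroup $Z_{P_n}(x)$ of finite index (so torsion-freeness need only be checked virtually), where the semidirect product of Theorem~\ref{T:centralizer_Bn} collapses to the \emph{direct} product of Proposition~\ref{P:Centralizer_in_Pn}. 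In that direct product, $\langle x \rangle$ is a maximal cyclic subgroup, and quotienting a finite product of torsion-free groups by a primitive central cyclic factor leaves a torsion-free group. Since $Z_{P_n}(x)/\langle x\rangle$ is then torsion-free of finite index in $\Gamma$, we conclude that $\Gamma$ is virtually torsion-free.
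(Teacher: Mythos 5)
Your proof is correct, and it reaches the paper's conclusion by a somewhat different route, though with more machinery than it needs. The paper restricts the projection $B_n\to\Sigma_n$ to $N_{B_n}(H)$, obtaining the finite-index subgroup $(N_{B_n}(H)\cap P_n)/H$ of $\Gamma$, and rules out torsion there by combining Corollary~\ref{C:common_power_cyclic} (which rests on uniqueness of roots in $P_n$) with maximality of $\langle x\rangle$ in $P_n$. You instead reach the finite-index subgroup $Z_{P_n}(x)/\langle x\rangle$ via the index-$\le 2$ inclusion $Z_{B_n}(x)\le N_{B_n}(\langle x\rangle)$ followed by intersection with $P_n$, and you rule out torsion using centrality of $x$ plus the same maximality; these two subgroups are in fact equal, since $N_{P_n}(H)=Z_{P_n}(H)$ (the paper proves this later via bi-orderability, though neither proof needs it here). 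Two remarks. First, everything you invoke from Theorem~\ref{T:centralizer_Bn} and Proposition~\ref{P:Centralizer_in_Pn} is unnecessary: once you are in $Z_{P_n}(x)$, the only facts used are that this group is torsion-free, that $x$ is central in it (true of any centralizer), and that $\langle x\rangle$ is maximal cyclic in $P_n$ (Lemma~\ref{puregenerator} and Proposition~\ref{P:maximal_cyclic}); in particular your entire \emph{main obstacle} paragraph --- the semidirect-product action and the conjugation $x\mapsto x^{-1}$ --- is a red herring, since those phenomena vanish the moment you pass to the finite-index subgroup. Second, the phrase \emph{primitive central cyclic factor} is misleading: $\langle x\rangle$ need not be a direct factor of $Z_{P_n}(x)$, and what your argument actually uses is the (true) statement that the quotient of a torsion-free group by a maximal cyclic \emph{central} subgroup is torsion-free; since this is the heart of the lemma, its one-line proof should be written out: if $\alpha^m=x^k$ with $m\neq 0$, then $\langle\alpha,x\rangle$ is a finitely generated torsion-free abelian group satisfying a nontrivial relation, hence cyclic, hence equal to $\langle x\rangle$ by maximality, so $\alpha\in\langle x\rangle$. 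With these two points addressed, your argument is a valid alternative to the paper's: it trades the symmetric-group quotient and the unique-roots corollary for the elementary observations that a centralizer has index at most $2$ in the normalizer of a cyclic subgroup and that elements of $Z_{P_n}(x)$ commute with $x$ for free.
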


\begin{proof}
We will use the short exact sequence $1\rightarrow P_n \rightarrow B_n \stackrel{p}{\rightarrow} \Sigma_n \rightarrow 1$, which we can restrict to $N_{B_n}(H)$:
$$
    1 \longrightarrow N_{B_n}(H)\cap P_n \longrightarrow N_{B_n}(H) \stackrel{p}{\longrightarrow} G \longrightarrow 1,
$$
where $G=p(N_{B_n}(H))\subset \Sigma_n$ is a finite group.

Recall that $H=\langle x \rangle $ where $x$ is pure. Then $H\unlhd (N_{B_n}(H)\cap P_n)$ and $H\unlhd N_{B_n}(H)$, so we can quotient the above injection by $H$ and we obtain:
$$
    1 \longrightarrow (N_{B_n}(H)\cap P_n)/H \longrightarrow \Gamma \stackrel{p}{\longrightarrow} G \longrightarrow 1.
$$
We need to show that $(N_{B_n}(H)\cap P_n)/H$ is torsion-free.

Let $\alpha H \in (N_{B_n}(H)\cap P_n)/H$ be a torsion element. That is, $\alpha\in N_{B_n}(H)\cap P_n$ is such that $\alpha^m \in H$ for some nonzero integer $m$. This means that $\alpha^m=x^k$ for some nonzero integers $m$ and $k$. By Corollary~\ref{C:common_power_cyclic}, $\langle \alpha,x\rangle$ is a cyclic subgroup, which belongs to $P_n$ as it is generated by pure braids. As $\langle x \rangle$ is a maximal cyclic subgroup in $P_n$, we get $\langle \alpha, x \rangle =\langle x \rangle=H$, so $\alpha\in H$ and $\alpha H$ is trivial in $(N_{B_n}(H)\cap P_n)/H$. Hence $(N_{B_n}(H)\cap P_n)/H$ is torsion-free.
\end{proof}

Now we can obtain our bound for the projective dimension.

\begin{lemma}
\label{vcd}
The virtual cohomological dimension of $\Gamma$ is bounded above by $n-2$.
\end{lemma}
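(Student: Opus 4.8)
The plan is to bound the cohomological dimension of a torsion-free finite-index subgroup of $\Gamma$ and then pass to $\mathrm{vcd}$. In the short exact sequence $1\to (N_{B_n}(H)\cap P_n)/H\to \Gamma\to G\to 1$ established above, the group $G\subset\Sigma_n$ is finite, so $K:=(N_{B_n}(H)\cap P_n)/H$ has finite index in $\Gamma$; by the previous lemma it is torsion-free. Hence $\mathrm{vcd}(\Gamma)=\mathrm{cd}(K)$, and it suffices to prove $\mathrm{cd}(K)\le n-2$.

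First I would identify $K$ with a quotient of a centralizer. Since $x$ is pure, $N_{B_n}(H)\cap P_n=N_{P_n}(\langle x\rangle)$, and because conjugation carries a generator of $\langle x\rangle$ to a generator, every $\gamma\in N_{P_n}(\langle x\rangle)$ satisfies $\gamma x\gamma^{-1}=x^{\pm 1}$; thus $Z_{P_n}(x)$ has index $1$ or $2$ in $N_{P_n}(\langle x\rangle)$. As $K$ is torsion-free, Serre's theorem gives $\mathrm{cd}(K)=\mathrm{cd}(Z_{P_n}(x)/\langle x\rangle)$, so I am reduced to bounding the latter.

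Next I would compute $\mathrm{cd}(Z_{P_n}(x))$. By Proposition~\ref{P:Centralizer_in_Pn} we have $Z_{P_n}(x)\cong P_{t_1}\times\cdots\times P_{t_r}\times\mathbb Z^{t}$ with $t_1+\cdots+t_r+t\le n+r-1$. Using $\mathrm{cd}(P_m)=m-1$ (the configuration space of $m$ points in $\mathbb C$ is an $(m-1)$-dimensional $K(P_m,1)$), $\mathrm{cd}(\mathbb Z^{t})=t$, and subadditivity of cohomological dimension under direct products, I obtain $\mathrm{cd}(Z_{P_n}(x))\le \sum_i(t_i-1)+t=(t_1+\cdots+t_r+t)-r\le (n+r-1)-r=n-1$.

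Finally comes the crux, which I expect to be the main obstacle: quotienting by the infinite cyclic \emph{central} subgroup $\langle x\rangle$ must drop the cohomological dimension by exactly one. The subgroup $\langle x\rangle$ is central because $x$ lies in its own centralizer, and set $Q:=Z_{P_n}(x)/\langle x\rangle$; since $Q$ is torsion-free, $x$ is in fact primitive in the free abelian center $Z(Z_{P_n}(x))\cong\mathbb Z^{r+t}$. I would run the Lyndon--Hochschild--Serre (Gysin) spectral sequence of $1\to\langle x\rangle\to Z_{P_n}(x)\to Q\to 1$: as $\langle x\rangle\cong\mathbb Z$ has cohomological dimension $1$, only two rows survive, and cup product with the Euler class yields isomorphisms $H^m(Q;N)\cong H^{m+2}(Q;N)$ for all $m\ge \mathrm{cd}(Z_{P_n}(x))$ and every $Q$-module $N$. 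Once one knows $\mathrm{cd}(Q)<\infty$, this periodicity forces $H^m(Q;N)=0$ for $m\ge \mathrm{cd}(Z_{P_n}(x))$, i.e. $\mathrm{cd}(Q)\le \mathrm{cd}(Z_{P_n}(x))-1\le n-2$. The delicate point is precisely securing $\mathrm{cd}(Q)<\infty$ \emph{before} invoking the Euler-class periodicity; I would deduce it from the finite-dimensional classifying space of $Z_{P_n}(x)$ together with the circle bundle $S^1\to BZ_{P_n}(x)\to BQ$ induced by the central $\langle x\rangle$. This establishes $\mathrm{cd}(K)\le n-2$ and hence $\mathrm{vcd}(\Gamma)\le n-2$.
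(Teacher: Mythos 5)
Your reductions are correct, and where they diverge from the paper they do so harmlessly: you pass from the normalizer to the centralizer via Serre's theorem on finite-index subgroups of torsion-free groups (allowing index $2$), whereas the paper shows $N_{P_n}(H)=Z_{P_n}(H)$ outright using bi-orderability of $P_n$; and your bound $\mathrm{cd}(Z_{P_n}(x))\le n-1$ from Proposition~\ref{P:Centralizer_in_Pn} is fine. The genuine gap is exactly the point you yourself flag as delicate: you must know $\mathrm{cd}(Q)<\infty$ for $Q=Z_{P_n}(x)/\langle x\rangle$ \emph{before} Euler-class periodicity can be converted into vanishing, and your proposed justification --- the fibration $S^1\to BZ_{P_n}(x)\to BQ$ with finite-dimensional total space --- does not give this. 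An $S^1$-fibration with finite-dimensional total space can perfectly well have a base whose fundamental group has infinite cohomological dimension: the central extension $1\to 2\mathbb{Z}\to\mathbb{Z}\to\mathbb{Z}/2\to 1$ yields a fibration $S^1\to S^1\to \mathbb{RP}^\infty$. The only feature that could rescue the implication is torsion-freeness of $Q$, but ``torsion-free with eventually periodic cohomology implies finite $\mathrm{cd}$'' is not a freestanding fact: it is essentially Talelli's periodicity conjecture, known only for restricted classes of groups (e.g.\ hierarchically decomposable ones, by work of Mislin and Talelli), and placing $Q$ in such a class requires precisely the structural analysis your argument is trying to bypass.

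The paper avoids the issue by never quotienting by $H$ alone: it quotients $Z_{P_n}(H)$ by its full center $C\cong\mathbb{Z}^{r+t}$, producing the extension $1\to C/H\to Z_{P_n}(H)/H\to G_1\times\cdots\times G_r\to 1$ in which every group is torsion-free of known finite dimension ($C/H\cong\mathbb{Z}^{r+t-1}$ by maximality of $H$, and $G_i=P_{t_i}/\langle\Delta_{t_i}^2\rangle$ embeds in $\Gamma_{0,t_i+1}$, of proper geometric dimension $t_i-2$); subadditivity of $\mathrm{gd}$ for extensions of torsion-free groups then gives $(r+t-1)+\sum_i(t_i-2)\le n-2$ directly, with no Gysin sequence and no finiteness question. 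Your route can in fact be repaired: the center of a pure braid group is a direct factor, $P_m\cong\mathbb{Z}\times\bigl(P_m/\langle\Delta_m^2\rangle\bigr)$, so $Z_{P_n}(x)\cong C\times G_1\times\cdots\times G_r$ and hence $Q\cong\mathbb{Z}^{r+t-1}\times G_1\times\cdots\times G_r$ has visibly finite $\mathrm{cd}$; your periodicity argument then legitimately sharpens this to $n-2$, and as a bonus needs only $\mathrm{cd}(P_m)=m-1$ rather than the Harer-type dimension count for punctured-sphere mapping class groups that the paper invokes. But as written, the crux step of your proof is unsupported. (Compare the paper's remark after the lemma, which acknowledges that a Gysin-sequence proof exists but prefers the structural one.)
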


\begin{proof}
We saw in the previous lemma that $(N_{B_n}(H)\cap P_n)/H$ is a torsion-free, finite index subgroup of $\Gamma$. Hence, we must show that $\text{cd}\left((N_{B_n}(H)\cap P_n)/H\right)\leq n-2$.

Notice that $N_{B_n}(H)\cap P_n= N_{P_n}(H)$. Now we point out that the pure braid group $P_n$ is bi-orderable \cite{RZ98}. This means that there is a total order of its elements which is invariant under multiplication on the left and also on the right. In such an order, if an element is positive (greater than the neutral element) its inverse is negative, and viceversa. Recall that $H=\langle x \rangle$. If $x>1$, then $ax>a$ and hence $axa^{-1}>1$ for every $a\in P_n$. Analogously, if $x<1$ then $axa^{-1}<1$ for every $a\in P_n$. This means that, in $P_n$, $x$ cannot be conjugated to its inverse. Therefore $N_{P_n}(H)=Z_{P_n}(H)$.

Now recall from Proposition~\ref{P:Centralizer_in_Pn} that $Z_{P_n}(H)=P_{t_1}\times \cdots \times P_{t_r}\times \mathbb Z^t$, where $r,t\geq 0$ and $t_1+\cdots +t_r+t\leq n+r-1$.

Let $C$ be the center of $Z_{P_n}(H)$. As the center of each $P_{t_i}$ is cyclic (generated by $\Delta_{t_i}^2$), it follows that $C\simeq \mathbb Z^{r+t}$. Denote by $G_i$ the quotient of $P_{t_i}$ by its center, which is a (finite index) subgroup of the mapping class group $\Gamma_{0,t_i+1}$ of the $(t_i+1)$-times punctured sphere. Then $Z_{P_n}(H)/C= G_1\times \cdots \times G_r$.

We then have the following short exact sequence, which is a central extension:
$$
    1 \longrightarrow C \longrightarrow Z_{P_n}(H) \longrightarrow G_1\times \cdots \times G_r \longrightarrow 1.
$$
Moreover, as $H$ is central in $Z_{P_n}(H)$, we can quotient by $H$ and obtain:
$$
    1 \longrightarrow C/H \longrightarrow Z_{P_n}(H)/H \longrightarrow G_1\times \cdots \times G_r \longrightarrow 1.
$$
Recall that we want to bound the cohomological dimension of the middle group. It is important to remark that all groups involved in the above exact sequence are torsion-free: First, we have shown in the previous lemma that $Z_{P_n}(H)/H$ is torsion-free, hence so is its subgroup $C/H$. On the other hand, a torsion element in $G_i=P_{t_i}/\langle \Delta_{t_i}^2\rangle$ comes from a periodic element in $P_{t_i}$. But the only pure braids which are periodic in $P_{t_i}$ are the powers of $\Delta_{t_i}^2$, hence $G_i$ is torsion-free for every $i$.





Now it follows from Theorem 5.15 in \cite{Luc05} that $\text{\rm gd}\ G=\gdp G$ is subadditive for extensions of torsion-free groups. Hence
$$
  \text{\rm gd}(Z_{P_n}(H)/H) \leq \text{\rm gd}(C/H) + \text{\rm gd}(G_1) +\cdots + \text{\rm gd}(G_r).
$$
Now $C\simeq \mathbb Z^{r+t}$, and $H$ is a cyclic subgroup which is maximal. Hence $C/H\simeq \mathbb Z^{r+t-1}$, and $\text{gd}(C/H) = r+t-1$. Also, each $G_i$ is a subgroup of the mapping class group $\Gamma_{0,t_i+1}$, so $\text{gd}(G_i) = \gdp(G_i)\leq \gdp(\Gamma_{0,t_i+1})=t_i-2$.  We then have:
$$
 \text{gd}(Z_{P_n}(H)/H) \leq (r+t-1)+t_1+\cdots +t_r -2r \leq  r-1+(n+r-1)-2r= n-2.
$$
Therefore $\text{\rm gd}(Z_{P_n}(H)/H) \leq  n-2$. As $\text{\rm cd}\ G = \text{\rm gd}\ G$ except in the case $\text{\rm cd}\ G=2$ and $\text{\rm gd}\ G=3$, it follows that $\text{cd}(Z_{P_n}(H)/H)\leq n-2$.
\end{proof}

\begin{remark}

A different proof of the previous lemma follows from a direct application of the Gysin sequence (see for example \cite{Whi78}, 5.12), but we prefer the one we provide because it includes an explicit description of the normalizer.

\end{remark}

Now, in order to bound the second term of the sum in Theorem \ref{Conchita}, we need to understand the torsion of $\Gamma$.

\begin{lemma}\label{L:finite_is_cyclic}
Any finite subgroup of $\Gamma$ is cyclic.
\end{lemma}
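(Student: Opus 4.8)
The plan is to analyze the short exact sequence
$$
1 \longrightarrow (N_{B_n}(H)\cap P_n)/H \longrightarrow \Gamma \stackrel{p}{\longrightarrow} G \longrightarrow 1
$$
established in the preceding lemmas, where $G\subset \Sigma_n$ is finite and the kernel $K:=(N_{B_n}(H)\cap P_n)/H$ is torsion-free. Given a finite subgroup $\Phi<\Gamma$, the restriction of $p$ to $\Phi$ has kernel $\Phi\cap K$, which is a finite subgroup of the torsion-free group $K$, hence trivial. Therefore $p$ embeds $\Phi$ into $G\subset \Sigma_n$, so $\Phi$ is isomorphic to a finite subgroup of the symmetric group. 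This reduces the problem to showing that the relevant finite subgroups are forced to be cyclic; the mere fact that $\Phi\hookrightarrow \Sigma_n$ is not enough, since symmetric groups contain many non-cyclic finite groups.

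The key additional input I would use is the internal structure of $\Gamma=N_{B_n}(H)/H$ in the reducible non-periodic case, together with root-uniqueness in the pure braid group. Since $\Phi$ is finite, a lift of each element of $\Phi$ to $N_{B_n}(H)$ has a power lying in $H=\langle x\rangle$; that is, every element $\phi\in\Phi$ is represented by some $\gamma\in N_{B_n}(H)$ with $\gamma^{m}\in\langle x\rangle$ for some $m\neq 0$. The strategy is to show that any two such lifts commute modulo $H$, which forces $\Phi$ to be abelian, and then to upgrade abelian to cyclic. For commutativity I would exploit that each $\gamma$ normalizes $\langle x\rangle$, so $\gamma x\gamma^{-1}=x^{\pm 1}$, and use Corollary~\ref{C:common_power_cyclic}: if $\gamma^m=x^k$ with $\gamma$ having a pure power and $x$ pure, then $\gamma$ and $x$ generate a cyclic group, so $\gamma$ commutes with $x$. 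This already shows each torsion element of $\Gamma$ pairs with $x$ controllably; combining this with the bi-orderability argument of Lemma~\ref{vcd} (which shows $x$ cannot be conjugated to its inverse in $P_n$) should pin down the $\pm 1$ ambiguity and let me conclude that distinct lifts interact like commuting roots of powers of $x$.

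The cleanest route to ``cyclic'' is probably to embed $\Phi$ into a quotient that is visibly a subgroup of a torsion-free-by-finite group with cyclic finite subgroups, or to use the product decomposition $Z_{P_n}(H)\simeq P_{t_1}\times\cdots\times P_{t_r}\times \mathbb Z^{t}$ from Proposition~\ref{P:Centralizer_in_Pn}. A finite subgroup of $\Gamma$ maps, via the projection to the external/tubular data, into a product of symmetric groups acting by permuting tubes with conjugate internal braids; the genuinely finite part must act by permuting factors, and an element fixing $H$ while permuting the $\mathbb Z^2$ pseudo-Anosov centralizers nontrivially would have to induce an order-$p$ automorphism there, constraining $\Phi$ to be cyclic by the discreteness of dilatation factors and uniqueness of roots.

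I expect the main obstacle to be exactly this last step: passing from \emph{abelian} to \emph{cyclic}. Ruling out a subgroup isomorphic to $\mathbb Z/p\times\mathbb Z/p$ requires showing that $\Gamma$ cannot contain two independent commuting torsion elements, and this is where I would lean hardest on the specific braid-theoretic facts---root-uniqueness in $P_n$ (Proposition~\ref{P:unique_roots}), the rigidity of pseudo-Anosov dilatation factors, and the bi-orderability of $P_n$---rather than on any purely group-theoretic bound, since $\Sigma_n$ itself certainly contains $\mathbb Z/2\times\mathbb Z/2$. The delicate point will be organizing the induction on the number of strands implicit in the reducible decomposition so that the cyclicity of finite subgroups is inherited through the iterated centralizer factorization.
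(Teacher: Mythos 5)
Your proposal has a genuine gap: it never actually proves the lemma. The reduction in your first paragraph (the kernel $(N_{B_n}(H)\cap P_n)/H$ is torsion-free, so $p$ embeds $\Phi$ into $G\subset\Sigma_n$) is correct but, as you yourself note, insufficient, and everything after that is a plan rather than an argument: the claim that any two lifts ``commute modulo $H$'' is not established, and the crucial upgrade from abelian to cyclic --- ruling out $\mathbb Z/p\times\mathbb Z/p$ --- is exactly the point you concede is unresolved. So the proposal ends where the real difficulty begins.

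The missing idea is to pull back rather than push forward. Consider the projection $p\colon N_{B_n}(H)\to N_{B_n}(H)/H=\Gamma$ and, for a finite subgroup $F<\Gamma$, its full preimage $G=p^{-1}(F)\subset B_n$. Then $G$ is an extension of the finite group $F$ by the infinite cyclic group $H$, hence virtually cyclic; since $B_n$ is torsion-free, every virtually cyclic subgroup of $B_n$ is cyclic (the fact from \cite{Mac96} quoted in Section 2 of the paper), so $G$ is cyclic, and therefore its quotient $F=p(G)$ is cyclic. This is the paper's proof, and it needs none of the braid-theoretic machinery (root uniqueness, dilatation factors, bi-orderability, the iterated centralizer decomposition) that your sketch tries to mobilize. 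Note that you had the key ingredient within reach: your observation that each lift $\gamma$ of a torsion element satisfies $\gamma^m=x^k$, so that $\langle\gamma,x\rangle$ is cyclic by Corollary~\ref{C:common_power_cyclic}, is precisely the element-wise shadow of the statement ``$p^{-1}(F)$ is cyclic''; applying the virtually-cyclic-implies-cyclic fact to the whole preimage at once, instead of to one element at a time, closes the argument in three lines.
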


\begin{proof}
Consider the natural projection $p: N_{B_n}(H) \rightarrow N_{B_n}(H)/H = \Gamma$. Let $F$ be a finite subgroup of $\Gamma$, and let $G=p^{-1}(F)$. Consider the restriction of $p$ to $G$. Its image is finite, and its kernel is a subgroup of $H$, so is cyclic. Hence $G$ is a virtually cyclic group. As $G\subset B_n$, which is torsion-free, $G$ is cyclic. Hence $p(G)=F$ is also cyclic.
\end{proof}

We are now in a good position to deal with the rank of Weyl groups, and our bound will be a consequence of the following lemma:

\begin{lemma}
For every finite $F<\Gamma$, every finite group of the Weyl group $W_\Gamma F$ is cyclic, and in particular, $rk(W_\Gamma F)=1$.
\label{Weylrank}
\end{lemma}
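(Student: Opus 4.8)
The plan is to deduce this directly from the already-established Lemma~\ref{L:finite_is_cyclic}, which says that every finite subgroup of $\Gamma$ is cyclic, by a routine pullback argument along the quotient map defining the Weyl group. Since $F$ is a finite subgroup of $\Gamma$, Lemma~\ref{L:finite_is_cyclic} already tells us that $F$ itself is cyclic; the real content is to transfer the ``finite subgroups are cyclic'' property from $\Gamma$ to $W_\Gamma F = N_\Gamma(F)/F$.

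First I would fix a finite subgroup $\bar K < W_\Gamma F$ and let $q\colon N_\Gamma(F)\to N_\Gamma(F)/F = W_\Gamma F$ denote the canonical projection. Setting $K = q^{-1}(\bar K)$, I get a subgroup of $N_\Gamma(F)\subseteq \Gamma$ that contains $F$ as a normal subgroup and fits into a short exact sequence $1\to F\to K\to \bar K\to 1$. Since $F$ is finite and $\bar K$ is finite, the extension $K$ is finite as well. The key point is that $K$ is a finite subgroup of $\Gamma$, so Lemma~\ref{L:finite_is_cyclic} applies and shows $K$ is cyclic. As $\bar K = K/F$ is a quotient of the cyclic group $K$, it is itself cyclic, which proves that every finite subgroup of $W_\Gamma F$ is cyclic.

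Finally, for the rank statement I would observe that an elementary abelian group $(\mathbb Z/p)^r$ is cyclic only when $r\leq 1$. Since we have just shown that every finite subgroup of $W_\Gamma F$, and in particular every elementary abelian subgroup, is cyclic, each such subgroup has rank at most one; hence $\mathrm{rk}(W_\Gamma F)\leq 1$, as required for the application of Theorem~\ref{Conchita}.

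I do not expect a genuine obstacle in this lemma: all the difficulty was absorbed into Lemma~\ref{L:finite_is_cyclic} (and, upstream, into the torsion-freeness of $(N_{B_n}(H)\cap P_n)/H$ via uniqueness and maximality of roots). The only point that requires a moment of care is verifying that the pullback $K = q^{-1}(\bar K)$ is indeed a \emph{finite} subgroup of $\Gamma$ so that Lemma~\ref{L:finite_is_cyclic} is applicable; this is immediate from the finiteness of both $F$ and $\bar K$ together with the inclusion $N_\Gamma(F)\subseteq \Gamma$.
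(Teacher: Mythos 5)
Your proof is correct and takes essentially the same approach as the paper: both pull back the finite subgroup of $W_\Gamma F$ along the projection $N_\Gamma F \to N_\Gamma F/F$, observe that the preimage is a finite subgroup of $\Gamma$ (an extension of a finite group by a finite group), apply Lemma~\ref{L:finite_is_cyclic} to conclude it is cyclic, and deduce that its quotient, the original subgroup, is cyclic. The only cosmetic difference is that you note the preimage contains all of $F$ (since it contains the kernel), where the paper writes the extension with $Y\cap F$; these coincide, so the arguments are the same.
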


\begin{proof}
Let $F<\Gamma$ be finite. Consider the short exact sequence:
$$
   1 \longrightarrow F \longrightarrow N_\Gamma F \stackrel{p}{\longrightarrow} N_\Gamma F/F \longrightarrow 1,
$$
where $p$ is the natural projection.

Let $X$ be a finite subgroup of $N_\Gamma F/F=W_\Gamma F$, and let $Y=p^{-1}(X)$. Then $Y$ is an extension of $Y\cap F$ by $X$, which are both finite groups. Hence $Y$ is finite. As $Y\subset N_\Gamma F \subset \Gamma$, by Lemma~\ref{L:finite_is_cyclic} $Y$ is cyclic. Hence $X=p(Y)$ is also cyclic.

Observe, in particular, that this implies that $rk(W_\Gamma F)=1$ for every finite $F$ in $\Gamma$.
\end{proof}

Now we can join the previous information to obtain:

\begin{proposition}

The proper geometric dimension of $\Gamma$ is smaller or equal to $n-1$, except possibly in the case when $\gdp\Gamma=3$, in which it can be at most $n$.

\end{proposition}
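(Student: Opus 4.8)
The plan is to feed the two ingredients just established into Mart\'{i}nez's inequality (Theorem~\ref{Conchita}). We have shown in Lemma~\ref{vcd} that $\text{\rm vcd}(\Gamma)\leq n-2$, and we have shown in Lemma~\ref{Weylrank} that $\text{\rm rk}(W_\Gamma F)=1$ for \emph{every} finite subgroup $F<\Gamma$. First I would combine these with the reduction carried out before Lemma~\ref{vcd}: since $\Gamma$ is virtually torsion-free (by the preceding lemma), Lemma~3.9 in \cite{Mar07} together with Lemma~7.3 in \cite{KrMi98} gives $\text{\rm pd}_{\mathbb{Z}W_\Gamma F}B(W_\Gamma F)\leq \text{\rm vcd}(\Gamma)$ for each finite $F$. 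Thus every summand in the maximum of Theorem~\ref{Conchita} is bounded by $\text{\rm vcd}(\Gamma)+\text{\rm rk}(W_\Gamma F)\leq (n-2)+1=n-1$, and hence
$$
   \underline{\text{\rm cd}}\ \Gamma \leq \max_{F\in \mathcal F_{\text{\rm Fin}}(\Gamma)}\{\text{\rm vcd}(\Gamma)+\text{\rm rk}(W_\Gamma F)\}\leq n-1.
$$

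The second step is to translate this cohomological bound into a bound on the proper geometric dimension, and this is precisely where the exceptional value $3$ enters. By Proposition~\ref{Eilenberg}, $\underline{\text{\rm cd}}\ \Gamma = \gdp\Gamma$ holds whenever $\gdp\Gamma\neq 3$; when $\gdp\Gamma=3$ we only know $2\leq \underline{\text{\rm cd}}\ \Gamma\leq 3$. So from $\underline{\text{\rm cd}}\ \Gamma\leq n-1$ I would conclude that $\gpd\Gamma\leq n-1$ in all cases \emph{except} the Eilenberg--Ganea-type ambiguity: if $\gdp\Gamma=3$ then the inequality $\underline{\text{\rm cd}}\ \Gamma\leq n-1$ is automatically consistent (as $n\geq 3$ forces $n-1\geq 2$) but does not rule out $\gdp\Gamma=3$. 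Hence in that single borderline case the most we can assert is $\gdp\Gamma\leq 3\leq n$, which is exactly the statement of the proposition.

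The only genuine obstacle here is the Eilenberg--Ganea gap, and it is unavoidable rather than a defect of the argument: for any group whose Bredon cohomological dimension is $2$ one cannot in general exclude geometric dimension $3$, so the phrasing of the proposition must carry the exceptional clause. Everything else is bookkeeping: one checks that $n\geq 3$ guarantees $n-1\geq 2$, so that a value of $\gdp\Gamma=3$ is only possible when $n-1\leq 3$, i.e. $n\leq 4$, and in those small cases $3\leq n$ still holds, keeping the eventual bound of $n$ for $\gd B_n$ intact. I would therefore state the conclusion as: $\gdp\Gamma\leq n-1$ whenever $\gdp\Gamma\neq 3$, and $\gdp\Gamma\leq n$ in the remaining case, which is what is needed to feed back into Corollary~\ref{C:cotas_suficientes}.
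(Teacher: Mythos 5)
Your proof is correct and follows exactly the paper's argument: combine Theorem~\ref{Conchita} with the bounds from Lemma~\ref{vcd} and Lemma~\ref{Weylrank} (via the reduction through \cite{KrMi98} and \cite{Mar07}, valid since $\Gamma$ is virtually torsion-free) to get $\underline{\text{\rm cd}}\ \Gamma\leq n-1$, then apply Proposition~\ref{Eilenberg} to pass to $\gdp\Gamma$, with the Eilenberg--Ganea ambiguity producing the exceptional clause. One peripheral remark is imprecise --- $\gdp\Gamma=3$ is not ``only possible when $n\leq 4$'' (it can occur for any $n$; it merely only threatens the bound $n-1$ when $n=3$) --- but this does not affect the argument, whose logical chain and conclusion are exactly those of the paper.
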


\begin{proof}

By Theorem \ref{Conchita}, Lemma \ref{vcd} and Lemma \ref{Weylrank}, we have $\underline{\rm{cd}}\ \Gamma\leq n-1$.
Hence by Proposition \ref{Eilenberg}, $\underline{\rm{cd}}\ \Gamma = \gdp \Gamma \leq n-1$, except possibly when $\underline{\text{cd}}\ \Gamma =2$ and $\gdp \Gamma =3$.
\end{proof}


\emph{Proof of Theorem \ref{teoremazo}}.

Let $B_n$ be the full braid group, for $n\geq 3.$ First, as $P_n<B_n$, Corollary \ref{C:lower_bound} implies that $\gd B_n\geq n$.

To check the other equality, we make use of L\"{u}ck-Weiermann model, and in particular the dimensional conditions of Corollary \ref{C:cotas_suficientes}:

 \begin{itemize}

 \item We know (by \cite{Arn70} or \cite{Har86}, see Section~\ref{S:braids} above) that the geometric dimension $\underline{\textrm{gd}}\textrm{ } B_n$ is always $n-1$, so it is smaller than $n$.

 \item As $N_{B_n}[H]\subseteq B_n$ for every cyclic $H< B_n$, $\underline{\textrm{gd}}\textrm{ } N_{B_n}[H]\leq \underline{\textrm{gd}}\textrm{ } B_n\leq n-1$.

 \item Let $n\geq 3$ and $H<B_n$ cyclic. If $H$ is generated by a periodic or a pseudo-Anosov braid, it was stated above that $\textrm{gd}_{{{\cal{F}}}[H]}N_{B_n}[H]\leq n-2$, and in particular smaller than $n$. If $H$ is generated by a reducible non-periodic braid,
     the previous proposition states that $\textrm{gd}_{{{\cal{F}}}[H]}N_{B_n}[H]\leq n$.


 \end{itemize}
Now appealing to Corollary \ref{C:cotas_suficientes}, $\gd B_n\leq n$, and thus $\gd B_n=n$. So we are done. \quad\qed\medskip

The same is true for pure subgroups of $B_n$:

\begin{corollary}

We have $\gde P_n=n$ for $n\geq 3$.

\end{corollary}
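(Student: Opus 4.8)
The plan is to establish $\gde P_n = n$ for $n \geq 3$ by the same two-sided strategy already used for $B_n$: the lower bound is immediate, and the upper bound follows from a careful reading of L\"uck-Weiermann together with the fact that $P_n$ is a subgroup of $B_n$. For the lower bound, Corollary~\ref{C:lower_bound} already gives $\gd P_n \geq n$, since $P_n$ contains a copy of $\mathbb{Z}^{n-1}$ by Proposition~\ref{P:abelian_inside} and $\gd \mathbb{Z}^{n-1} = n$. So the entire content of the corollary is the upper bound $\gd P_n \leq n$.

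For the upper bound, the cleanest route is to apply Corollary~\ref{C:cotas_suficientes} directly to $G = P_n$, exactly mirroring the proof of Theorem~\ref{teoremazo}. First I would check the proper-dimension conditions: since $P_n < B_n$, Remark~\ref{subfamilies} gives $\gdp P_n \leq \gdp B_n = n-1 < n$, and likewise $\gdp N_{P_n}[H] \leq \gdp P_n \leq n-1$ for every class $[H]$. The substantive condition is to bound $\mathrm{gd}_{\F[H]} N_{P_n}[H]$ by $n$ for each equivalence class $[H]$ of infinite cyclic subgroups of $P_n$. Here the key simplification is that, because every element of $P_n$ is pure, Lemma~\ref{puregenerator} already produces a pure generator $x$ with no proper pure root and with $N_{P_n}[H] = \Comm_{P_n}(H) = N_{P_n}(H)$; moreover, since $P_n$ is bi-orderable (as used in Lemma~\ref{vcd}), $x$ cannot be conjugate to its inverse \emph{inside} $P_n$, so in fact $N_{P_n}(H) = Z_{P_n}(H)$. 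Thus $\Gamma_P := N_{P_n}[H]/H = Z_{P_n}(H)/H$, which is precisely the group whose geometric dimension was bounded by $n-2$ in the proof of Lemma~\ref{vcd}. Applying Theorem~\ref{T:gdFH=gdpCommH/K} (using $H \unlhd N_{P_n}(H)$ and $H \simeq \mathbb{Z}$) gives $\mathrm{gd}_{\F[H]} N_{P_n}[H] = \gdp(Z_{P_n}(H)/H) \leq n-2$, comfortably below $n$.

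With all three bullets of Corollary~\ref{C:cotas_suficientes} verified at level $k = n$, I conclude $\gd P_n \leq n$, and combined with the lower bound this yields $\gde P_n = n$. The main point to be careful about is that the reducible non-periodic case, which was the genuine obstacle for $B_n$ and required C.~Mart\'inez's Theorem~\ref{Conchita} to absorb a possible rank contribution, \emph{collapses} for $P_n$: the bi-orderability of $P_n$ forces $N_{P_n}(H) = Z_{P_n}(H)$, so the quotient $Z_{P_n}(H)/H$ is already torsion-free (as shown in Lemma~\ref{vcd}) and its dimension is controlled directly by Proposition~\ref{P:Centralizer_in_Pn} and the subadditivity of $\gd$ over extensions of torsion-free groups, without ever needing to invoke Weyl-group ranks. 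Hence the only real work is bookkeeping to confirm that the $B_n$ arguments restrict verbatim to $P_n$ and that the bound $n-2$ obtained for $\gdp(Z_{P_n}(H)/H)$ is uniform across all three Nielsen-Thurston types of $x$.
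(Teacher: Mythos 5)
Your proof is correct, but it takes a genuinely different route from the paper's, which is essentially a one-liner: the lower bound is Corollary~\ref{C:lower_bound}, exactly as you say, and for the upper bound the paper simply invokes Theorem~\ref{teoremazo} together with Remark~\ref{subfamilies} --- since the virtually cyclic subgroups of $P_n$ are precisely the members of $\F_{vc}(B_n)$ contained in $P_n$, restricting to $P_n$ the action on any model of $\E B_n$ gives a model of $\E P_n$, so $\gd P_n\leq \gd B_n=n$. You instead re-run the L\"uck--Weiermann argument (Corollary~\ref{C:cotas_suficientes}) directly on $G=P_n$. This is more work, and it is redundant once Theorem~\ref{teoremazo} is available, but it does buy something the paper's proof does not: it shows that for $P_n$ the entire difficulty of the reducible non-periodic case evaporates, because bi-orderability of $P_n$ forces $N_{P_n}(\langle x\rangle)=Z_{P_n}(\langle x\rangle)$, the quotient $Z_{P_n}(H)/H$ is torsion-free, and Proposition~\ref{P:Centralizer_in_Pn} plus subadditivity bound its proper dimension by $n-2$ uniformly in all three Nielsen--Thurston types (periodic: $P_n/\langle\Delta^2\rangle<\Gamma_{0,n+1}$ gives $\leq n-2$; pseudo-Anosov: $\mathbb Z^2/\langle x\rangle\simeq\mathbb Z$ gives $1$; reducible: the computation inside Lemma~\ref{vcd}). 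Consequently Mart\'inez's Theorem~\ref{Conchita} and the Weyl-group rank analysis are never needed, and you obtain the sharper statement $\text{\rm gd}_{\F[H]}N_{P_n}[H]\leq n-2$ for every class $[H]$, hence in principle a more explicit pushout model for $\E P_n$ built from $P_n$'s own centralizers. The steps you leave as bookkeeping --- that Lemma~\ref{puregenerator} intersected with $P_n$ gives $\Comm_{P_n}(H)=N_{P_n}(\langle x\rangle)$, and that Theorem~\ref{T:gdFH=gdpCommH/K} applies with $G=P_n$ --- are indeed straightforward, so the argument is sound.
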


\begin{proof}
The bound $\gd P_n\geq n$ was proved in Corollary \ref{C:lower_bound}, while $\gd P_n\leq n$ is a consequence of Theorem \ref{teoremazo}, taking into account Remark \ref{subfamilies}.
\end{proof}


\vspace{.3cm}
\noindent {\footnotesize
\begin{minipage}[t]{6cm}
{\bf Ram\'on Flores:} \\
Dpto. Geometr\'{\i}a y Topolog\'{\i}a. \\
Facultad de Matem\'aticas\\
Instituto de Matem\'aticas (IMUS)\\
Universidad de Sevilla \\
Av. Reina Mercedes s/n\\
41012 Sevilla (SPAIN)\\
E-mail: ramonjflores@us.es
\\ URL: www.personal.us.es/ramonjflores

\end{minipage}
\hfill
\begin{minipage}[t]{5.4cm}
{\bf Juan Gonz\'alez-Meneses:} \\
Dpto. \'{A}lgebra.\\
Facultad de Matem\'{a}ticas\\
Instituto de Matem\'aticas (IMUS)\\
Universidad de Sevilla. \\
Av. Reina Mercedes s/n \\
41012 Sevilla (SPAIN)
\\ E-mail:  meneses@us.es
\\ URL: www.personal.us.es/meneses
\end{minipage}
}

\end{document}